     \def\section{\@startsection{section}{1}%
     \z@{.7\linespacing\@plus\linespacing}{.5\linespacing}%
     {\bfseries
     \centering
     }}
     \def\@secnumfont{\bfseries}
\newtheorem{theorem}{Theorem}[section]
\newtheorem{lemma}[theorem]{Lemma}
\newtheorem{corollary}[theorem]{Corollary}
\theoremstyle{definition}
\newtheorem{definition}[theorem]{Definition}
\newtheorem{example}[theorem]{Example}
\theoremstyle{remark}
\numberwithin{equation}{section}
\newcommand{\MM}{\mathbb{M}}
\newcommand{\NN}{\mathbb{N}}
\newcommand{\RR}{\mathbb{R}}
\newcommand{\EEE}{\mathcal{E}}
\newcommand{\FFF}{\mathcal{F}}
\newcommand{\LLL}{\mathcal{L}}
\newcommand{\df}[1]{\,\mathrm{d}#1}                         
\newcommand{\eps}{\varepsilon}                              
\begin{document}

\setlength{\parindent}{0cm}
\setlength{\parskip}{0.5cm}

\title[Changes of measure for counting processes]{Exponential martingales and changes of measure for counting processes}

\author[A. Sokol and N. R. Hansen]{Alexander Sokol and Niels Richard Hansen}

\address{Alexander Sokol: Institute of Mathematics, University of
  Copenhagen, 2100 Copenhagen, Denmark, alexander@math.ku.dk}

\address{Niels Richard Hansen: Institute of Mathematics, University of
  Copenhagen, 2100 Copenhagen, Denmark,niels.r.hansen@math.ku.dk}

\subjclass[2010] {Primary 60G44; Secondary 60G55}

\keywords{Counting process, Exponential martingale,
  Girsanov, Intensity, Uniform integrability}

\begin{abstract}
We give sufficient criteria for the Dol{\'e}ans-Dade exponential of a
stochastic integral with respect to a counting process local martingale to be a
true martingale. The criteria are adapted particularly to the case of counting
processes and are sufficiently weak to be useful and verifiable, as we illustrate by several
examples. In particular, the criteria allow for the construction of for example
nonexplosive Hawkes processes as well as counting processes with stochastic intensities
depending on diffusion processes.
\end{abstract}

\maketitle

\setlength{\parindent}{0cm}
\setlength{\parskip}{0.5cm}

\section{Introduction}

Statistical counting processes models are fundamental for the modeling of
events occurring in continuous time. The simplest such process is the Poisson
process, where the intensity of new events is constant. For more complex phenomena,
it is necessary to consider models based on counting process with stochastic intensities, see
e.g. \cite{MR636252}. Such
models find applications in subjects as diverse as observational studies, neuronal
spike trains, molecular biology and corporate defaults, see e.g.
\cite{MR2817610,Truccolo2005,Masud2011,CarstensenEtAl2010,AzizEtAl2012}. Therefore, as discussed by Gjessing et al.
in \cite{MR2726223}, for the purpose of statistical modeling
using counting processes, it is of interest to formulate generic statistical models
based on counting processes with stochastic intensity in terms of a family of candidate
intensities, and it is then essential to be able to verify that the intensities result in well-defined
nonexplosive models.

The purpose of this paper is to obtain results for constructing
nonexplosive counting processes with stochastic intensities via a change of measure on the
background probability space. The objective is to derive verifiable
conditions in a counting process context for an appropriate exponential martingale
to be a true martingale. Using a change of measure, this allows for construction of nonexplosive counting processes. 

To this end, we need conditions on the candidate
intensities. If the intensity is adapted to the filtration generated
by the counting process itself, precise results are obtainable by
transferring the problem to a canonical setup, see e.g. Theorem 5.2.1 in
\cite{MR2189574}. If $N$ is a counting process, which, under some probability measure $P$, is a
homogeneous Poisson counting process, this theorem shows that given a candidate
intensity process $\lambda$ satisfying $\lambda_t \leq a(N_{t-})$ for a sequence $a(n)$ such
that $\sum_{n=1}^\infty 1/ a(n)$ diverges, then there is a measure $Q$ with likelihood process with respect to $P$ being an
exponential martingale, such that $N$ is a nonexplosive counting
process with intensity $\lambda$ under $Q$. This result is mentioned
in \cite{MR2726223} as the Jacobsen condition. It holds on the canonical
spaces considered in \cite{MR2189574} and allows for
the construction of a counting process on bounded intervals with
intensity $\lambda$ by a change of measure.

Alternative approaches to ensure the existence of a nonexplosive
counting process with a given intensity are surveyed in \cite{MR2726223}, but 
a general yet verifiable condition is difficult to obtain. The most general, explicit
condition mentioned in \cite{MR2726223} is (25). This is a growth condition on
$\lambda_t^{a}$ with $a > 1$, which is typically too strong
or difficult to verify in practice. A consequence of our results is
that a growth condition with $a = 1$ is sufficient, which
is much more useful.

The starting point for our sufficient criteria is the paper by L{\'e}pingle and M{\'e}min, \cite{MR489492}, and
their general results, which we adapt to the specific case of 
Dol{\'e}ans-Dade exponentials of stochastic integrals with respect to counting
process local martingales. Decomposing the appropriate exponential martingale
into two factors, we apply a predictable criterion (based on a moment condition
for a predictable process) and an optional criterion (based on a moment condition for
an optional process) appropriately for each part. For the particular case where the initial
counting process is a homogeneous Poisson process, the part of the moment condition initiated
by the predictable criterion ultimately vanishes, yielding a useful criterion for the exponential martingale
to be a true martingale which has no direct analogue in the literature for general martingales.

As we demonstrate by example, our criterion suffices to prove nonexplosion of counting
processes for a variety of stochastic intensity processes. In particular, due to the
technique based on changes of measure, our criterion is applicable not only for intensities
depending on the counting process itself, but also for intensities depending on auxiliary
processes such as diffusions, allowing for the construction of models with interacting
diffusion and jump processes.

\section{Summary of results}
\label{sec:Summary}

In this section, we state and discuss our main results, postponing proofs to Section \ref{sec:SuffUI}. Consider a filtered probability space
$(\Omega,\FFF,(\FFF_t)_{t\ge0},P)$ satisfying the usual conditions, see \cite{MR2273672}, Section I.1 for the definition of
this as well as other standard probabilistic concepts. We say that $N$ is a
nonexplosive $d$-dimensional counting process if $N$ is c\`{a}dl\`{a}g and piecewise
constant with jumps of size one, and no coordinates of $N$ jump at the
same time. We say that a process $X$ is locally bounded if there is a sequence of
stopping times increasing almost surely to infinity such that $X^{T_n}1_{(T_n>0)}$ is bounded. Let $\lambda$ be a nonnegative,
predictable and locally bounded $d$-dimensional process. Then
$\lambda$ is almost surely integrable on compacts with respect to
the Lebesgue measure. We say that $N$ is a counting process with intensity $\lambda$ if it holds that $N_t^i - \int_0^t \lambda^i_s\df{s}$
is a local martingale for each $i$. Note in particular that
since the predictable $\sigma$-algebra considered is the one generated
by the filtration $(\FFF_t)_{t\ge0}$, the intensity is allowed to
depend on other processes than just $N$.

We recall the definition of
Dol{\'e}ans-Dade exponentials. In the following, all semimartingales $X$ are assumed to have c\`{a}dl\`{a}g paths,
that is, $X(\omega)$ is c\`{a}dl\`{a}g for all $\omega\in\Omega$. By
$X_{t-}$, we denote the limit of $X_s$ as $s$ tends to $t$ from below,
and we write $\Delta X_t = X_t-X_{t-}$ for the jump of $X$ at $t$. Assume given a semimartingale $X$ with initial value
zero. By Theorem II.37 of \cite{MR2273672} and Theorem I.4.61 of
\cite{MR1943877}, the stochastic differential equation $Z_t = 1 + \int_0^t
Z_{s-}\df{X}_s$ has a c\`{a}dl\`{a}g adapted solution, unique up to
indistinguishability, and the solution is
\begin{align}
  \label{eq:ExpSMGDef}
  \EEE(X)_t &= \exp\left(X_t-\frac{1}{2}[X^c]_t\right)\prod_{0<s\le t}(1+\Delta X_s)\exp(-\Delta X_s),
\end{align}
where $X^c$ is the continuous
martingale part of $X$, see Proposition I.4.27 of \cite{MR1943877}, and
$[X^c]$ denotes the quadratic variation process. If $X$ is a local martingale, $\EEE(X)$ is a local martingale as
well, and in this case, we refer to $\EEE(X)$ as an exponential
martingale. The case where $\Delta X\ge-1$ will be of particular
importance to us. In this case, $\EEE(X)$ is nonnegative, and we may put $R=\inf\{t\ge0\mid \Delta X_t=-1\}$ and obtain
\begin{align}
  \label{eq:ExpSMGDefStopped}
  \EEE(X)_t &= 1_{(t<R)}\exp\left(X_t-\frac{1}{2}[X^c]_t+\sum_{0<s\le t}\log(1+\Delta X_s)-\Delta X_s\right).
\end{align}

Now assume given a $d$-dimensional nonexplosive counting process $N$ with nonnegative,
predictable and locally bounded intensity $\lambda$, and assume given another
$d$-dimensional nonnegative, predictable and locally bounded process
$\mu$.

\begin{definition}
\label{def:Compatibility}
 We say that $\mu$ is $\lambda$-compatible if it holds for all
 $\omega\in\Omega$ that
 $\mu^i_t(\omega)=0$ whenever $\lambda^i_t(\omega)=0$, and if the process $\gamma$ defined by
$\gamma^i_t=\mu^i_t(\lambda^i_t)^{-1}$ for $i\le d$ is locally bounded.
\end{definition}

In Definition \ref{def:Compatibility}, we use the convention that zero
divided by zero is equal to one. Now assume that $\mu$ is $\lambda$-compatible. Define $M$ to be
the $d$-dimensional local martingale given by $M^i_t = N^i_t-\int_0^t\lambda^i_s\df{s}$. Put $\gamma^i_t=\mu^i_t(\lambda^i_t)^{-1}$ and
$H^i_t=\gamma^i_t-1$ for $t\ge0$. As we have assumed that $\mu$ is
$\lambda$-compatible, $\gamma$ and $H$ are both well-defined and locally
bounded real-valued processes. We define $(H\cdot
M)_t=\sum_{i=1}^d \int_0^t H^i_s\df{M^i}_s$, $H\cdot M$ is then a
one-dimensional process.

The following lemma shows that given $\lambda$ and $\mu$, $\EEE(H\cdot M)$ is the
relevant exponential martingale to consider for changing the
distribution of $N$ from a counting process with intensity $\lambda$ to a counting process with intensity
$\mu$, where $H^i_t=\mu^i_t(\lambda^i)^{-1}_t-1$. 

\begin{lemma}
\label{lemma:PointProcessMeasChg}
Let $T$ be a stopping time and assume that $\EEE(H\cdot M)^T$ is a uniformly integrable
martingale. With $Q$ being the probability measure with Radon-Nikodym derivative $\EEE(H\cdot M)_T$ with respect
to $P$, it holds that $N$ is a counting process under $Q$ with
intensity $1_{[0,T]}\mu+1_{(T,\infty)}\lambda$. In particular, if
$\EEE(H\cdot M)$ is a martingale, it holds for any $t\ge0$ and with
$Q_t$ being the probability measure with Radon-Nikodym derivative $\EEE(H\cdot M)_t$ with respect
to $P$ that $N$ is a counting process under $Q_t$ with
intensity $1_{[0,t]}\mu+1_{(t,\infty)}\lambda$.
\end{lemma}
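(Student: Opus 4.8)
The plan is to verify the defining property of an intensity directly under $Q$: that for each coordinate $i$, the process $N^i_t - \int_0^t (1_{[0,T]}\mu^i_s + 1_{(T,\infty)}\lambda^i_s)\df{s}$ is a $Q$-local martingale, and then argue that $N$ remains a counting process (c\`{a}dl\`{a}g, piecewise constant, unit jumps, no common jump times) under $Q$. The latter structural facts are immediate: since $\EEE(H\cdot M)^T$ is a uniformly integrable martingale it is in particular a.s.\ positive up to its terminal value on the set where it matters, so $Q \ll P$ and $P \ll Q$ on $\FFF_\infty$ fails in general, but $Q \ll P$ suffices to transfer the a.s.\ pathwise properties of $N$ from $P$ to $Q$. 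Thus the content is the intensity computation.

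The main tool will be the Girsanov theorem for the change of measure given by an exponential martingale; I would use the version for general (not necessarily continuous) local martingales, e.g.\ Theorem III.3.11 or III.3.17 of \citep{J2003}, or equivalently the predictable-projection formulation in \citep{M1978}. The key computation is the identification of the Girsanov drift correction. Writing $\ZZZ = \EEE(H\cdot M)$, under $Q$ a $P$-local martingale $L$ is transformed into a $Q$-local martingale by subtracting $\langle L, (H\cdot M)^c\rangle + (\text{jump correction})$; concretely, for $L = M^i$ the correction term is $\int_0^\cdot H^i_s \df{\langle M^i\rangle^p_s}$ where the predictable bracket of $M^i$ is $\int_0^\cdot \lambda^i_s\df{s}$ on $[0,T]$ (and one must be careful that the Girsanov kernel is $H^i$ only on $[0,T]$, since $\ZZZ = \EEE((1_{[0,T]}H)\cdot M)$ after time $T$ because $(H\cdot M)^T$ is what drives $\ZZZ^T$ and $\ZZZ$ is constant in the relevant sense — more precisely $\EEE(H\cdot M)^T = \EEE((H\cdot M)^T)$). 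So on $[0,T]$ the compensator of $N^i$ under $Q$ becomes $\int_0^t \lambda^i_s\df{s} + \int_0^t H^i_s\lambda^i_s\df{s} = \int_0^t \gamma^i_s\lambda^i_s\df{s} = \int_0^t \mu^i_s\df{s}$, using $\gamma^i = \mu^i(\lambda^i)^{-1}$ and the $\lambda$-compatibility convention; after time $T$ there is no correction and the compensator remains $\int \lambda^i$. This yields exactly the claimed intensity $1_{[0,T]}\mu + 1_{(T,\infty)}\lambda$. The local boundedness of $\mu$ (hence of the candidate intensity) together with its predictability ensures it qualifies as an intensity in the sense defined in Section \ref{sec:Summary}.

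A cleaner route, avoiding the subtlety of the kernel being $1_{[0,T]}H$, is to apply Girsanov on the stopped space first: on $[0,T]$ use that $\ZZZ^T = \EEE((H\cdot M)^T)$ is a UI martingale with Girsanov kernel $H$, concluding $N^i - \int_0^\cdot \mu^i_s\df{s}$ is a $Q$-local martingale on $[0,T]$; then observe that after $T$, $\ZZZ$ is frozen at $\ZZZ_T$ and the conditional law of the post-$T$ increments of $N$ given $\FFF_T$ is unchanged (since $dQ/dP$ is $\FFF_T$-measurable), so $N$ retains its $P$-intensity $\lambda$ on $(T,\infty)$; splicing the two compensators gives the result, and the ``in particular'' statement for deterministic $T=t$ follows by taking $T\equiv t$, noting that $\EEE(H\cdot M)$ being a true martingale makes each $\EEE(H\cdot M)^t$ a UI martingale. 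I expect the main obstacle to be the careful bookkeeping in the jump-type Girsanov theorem — ensuring the predictable projection / compensator identities are applied with the correct kernel $H^i$ against the correct reference measure $\lambda^i_s\df{s}$, and checking the required integrability ($H$ locally bounded, $\lambda$ locally bounded) so that all the brackets and their predictable projections exist — rather than any conceptual difficulty.
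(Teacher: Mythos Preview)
Your proposal is correct and follows essentially the same route as the paper: apply a Girsanov-type result to the $P$-local martingale $M^i$ with density process $\EEE((H\cdot M)^T)$, identify the drift correction as $\langle M^i,(H\cdot M)^T\rangle$, and compute that this equals $\int_0^\cdot 1_{[0,T]}(s)H^i_s\lambda^i_s\df{s}$, whence the new compensator of $N^i$ is $\int_0^\cdot (1_{[0,T]}\mu^i_s+1_{(T,\infty)}\lambda^i_s)\df{s}$. The paper packages the Girsanov step as an in-house lemma derived from Protter's Theorem~III.41, whereas you cite Jacod--Shiryaev, but the content is the same. One small point you glossed over: when you write ``for $L=M^i$ the correction term is $\int_0^\cdot H^i_s\df{\langle M^i\rangle^p_s}$'', you are tacitly using that the coordinates of $N$ have no common jumps, so that $[M^i,M^j]=0$ for $i\neq j$ and only the $j=i$ summand of $\langle M^i,\sum_j (H^j\cdot M^j)^T\rangle$ survives; the paper makes this explicit, and you should too.
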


In general, we cannot expect $\EEE(H\cdot M)$ to be a uniformly integrable martingale, only an ordinary
martingale, because the distributions of counting processes with
intensities which differ sufficiently will in general be singular. For example,
the distributions of two homogeneous Poisson processes with different
intensities are singular, see Proposition 3.24 of \cite{MR1113698}.

As an aside, note that the measure $Q_T$ obtained in Lemma
\ref{lemma:PointProcessMeasChg} of course always will be absolutely
continuous with respect to $P$. A natural question to ask is when $Q_T$
and $P$ will be equivalent. This is the case when the Radon-Nikodym
derivative is almost surely positive. Lemma \ref{lemma:QPEquivalence} gives a
condition for this to be the case.

\begin{lemma}
\label{lemma:QPEquivalence}
If the set of zeroes of $\mu$ has Lebesgue measure zero, $\EEE(H\cdot
M)$ is almost surely positive.
\end{lemma}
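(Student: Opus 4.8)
The plan is to prove that the stopping time $R=\inf\{t\ge0\mid\Delta(H\cdot M)_t=-1\}$ is almost surely infinite. Since $\mu,\lambda\ge0$ we have $\gamma^i\ge0$, hence $H^i\ge-1$ and $\Delta(H\cdot M)\ge-1$, so the representation of $\EEE(X)$ recalled in Section~\ref{sec:Summary} applies: $\EEE(H\cdot M)_t$ is positive for $0\le t<R$ and zero for $R\le t<\infty$. Thus $R=\infty$ almost surely is precisely what is needed.

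First I would identify when $H\cdot M$ can jump by $-1$. As $t\mapsto\int_0^t\lambda^i_s\df{s}$ is continuous, $\Delta M^i=\Delta N^i\in\{0,1\}$, and since no two coordinates of $N$ jump simultaneously, $\Delta(H\cdot M)_t=\sum_{i=1}^d H^i_t\Delta N^i_t$ equals $H^i_t=\gamma^i_t-1$ at a jump time of $N^i$ and vanishes otherwise. Hence $\Delta(H\cdot M)_t=-1$ forces $\gamma^i_t=0$ for the unique jumping coordinate $i$. By the conventions of Definition~\ref{def:Compatibility}, if $\lambda^i_t=0$ then $\lambda$-compatibility gives $\mu^i_t=0$ and $\gamma^i_t=0/0=1$, so $\gamma^i_t=0$ can occur only where $\lambda^i_t>0$, where it forces $\mu^i_t=0$. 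Therefore $\{R\le t\}\subseteq\{\sum_{i=1}^d\int_0^t 1_{(\mu^i_s=0)}\df{N^i_s}\ge1\}$, and Markov's inequality gives $P(R\le t)\le\sum_{i=1}^d E\int_0^t 1_{(\mu^i_s=0)}\df{N^i_s}$.

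Next I would evaluate these expectations using that $\int_0^\cdot\lambda^i_s\df{s}$ is the compensator of the locally-integrable-variation process $N^i$. Taking a localizing sequence $T_n\uparrow\infty$ that reduces each $M^i$, the bounded predictable integrand $1_{(\mu^i_s=0)}$ makes $(1_{(\mu^i=0)}\cdot M^i)^{T_n}$ a mean-zero martingale, so $E\int_0^{t\wedge T_n} 1_{(\mu^i_s=0)}\df{N^i_s}=E\int_0^{t\wedge T_n}1_{(\mu^i_s=0)}\lambda^i_s\df{s}$; letting $n\to\infty$ and applying monotone convergence yields the same identity on $[0,t]$. By hypothesis the random set $\{s\ge0\mid\mu^i_s=0\}$ has Lebesgue measure zero almost surely, so the pathwise integral $\int_0^t 1_{(\mu^i_s=0)}\lambda^i_s\df{s}$ is zero almost surely, whence $E\int_0^t 1_{(\mu^i_s=0)}\df{N^i_s}=0$. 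Combining the two displays gives $P(R\le t)=0$ for every $t\ge0$, hence $P(R=\infty)=1$ and $\EEE(H\cdot M)$ is almost surely positive.

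I do not expect a real obstacle here. The only points needing care are the bookkeeping with the conventions of Definition~\ref{def:Compatibility} ensuring $\gamma^i_t=0\Rightarrow\mu^i_t=0$, and the localization legitimizing the compensator identity $E\int_0^t 1_{(\mu^i_s=0)}\df{N^i_s}=E\int_0^t 1_{(\mu^i_s=0)}\lambda^i_s\df{s}$ given that $N^i$ need not be integrable and $\lambda^i$ is only locally bounded; both are routine.
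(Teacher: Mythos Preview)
Your proof is correct and follows essentially the same approach as the paper: both show that almost surely no $N^i$ jumps at a zero of $\mu^i$ by establishing $E\int_0^t 1_{(\mu^i_s=0)}\df{N^i_s}=E\int_0^t 1_{(\mu^i_s=0)}\lambda^i_s\df{s}=0$ via the compensator identity and localization, and then conclude that $H\cdot M$ has no jumps of size $-1$. The paper packages this compensator computation into a separate preliminary lemma (Lemma~\ref{lemma:LebesgueNullZeroes}), whereas you inline it, but the argument is otherwise identical.
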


Finally, we state our sufficient criteria for $\EEE(H\cdot M)$ to be a
true martingale. Defining $\log_+ x=\max\{0,\log
x\}$ for $x\ge0$, with the convention that the logarithm of zero is
minus infinity, our main results are the following.

\begin{theorem}
\label{theorem:mainPointProcessMGCrit}
Assume that $\lambda$ and $\mu$ are nonnegative, predictable and
locally bounded. Assume that $\mu$ is $\lambda$-compatible. It holds that $\EEE(H\cdot M)$ is a martingale if there is an $\eps>0$ such that
whenever $0\le u\le t$ with $t-u \le\eps$, one of the following two
conditions are satisfied:
\begin{align}
  &E\exp\left(\sum_{i=1}^d\int_u^t (\gamma^i_s\log\gamma^i_s-(\gamma^i_s-1))\lambda^i_s\df{s}\right)<\infty\quad\textrm{or}\label{eq:MainCrit1}\\
  &E\exp\left(\sum_{i=1}^d \int_u^t\lambda_s^i\df{s}+\int_u^t\log_+ \gamma^i_s\df{N^i}_s\right)<\infty.\label{eq:MainCrit2}
\end{align}
\end{theorem}

\begin{corollary}
\label{coro:mainPoissonMGCrit}
Assume that $\lambda=1$ and assume that $\mu$ is nonnegative,
predictable and locally bounded. Then $\mu$ is $\lambda$-compatible. It holds that $\EEE(H\cdot M)$ is a martingale if there is an $\eps>0$ such
that whenever $0\le u\le t$ with $t-u\le\eps$, one of the following
two conditions are satisfied:
\begin{align}
  &E\exp\left(\sum_{i=1}^d\int_u^t \mu^i_s\log_+\mu^i_s\df{s}\right)<\infty \quad\textrm{ or }\label{eq:CoroCrit1}\\
  &E\exp\left(\sum_{i=1}^d\int_u^t\log_+ \mu^i_s\df{N^i}_s\right)<\infty.\label{eq:CoroCrit2}
\end{align}
\end{corollary}

The immediate use of Theorem \ref{theorem:mainPointProcessMGCrit} and its
corollary is as an existence result for nonexplosive counting processes with
particular intensities, as the change of measure obtained from the martingale property of
$\EEE(H\cdot M)$ yields the existence of a nonexplosive counting process
distribution with given intensity $\mu$ on a bounded time interval
$[0,t]$. That this is the case may be seen from Lemma
\ref{lemma:PointProcessMeasChg}, which shows that under the measure
$Q_t$ with Radon-Nikodym derivative $\EEE(H\cdot M)_t$ with respect to
$P$, $N$ is a counting process with intensity
$1_{[0,t]}\mu+1_{(t,\infty)}\lambda$.

Note that it is not necessarily
possible to use the family $(Q_t)$ to obtain the existence of a
limiting probability measure $Q_\infty$ under which $N$ has intensity
$\mu$ on all of $\RR_+$. Such a limiting probability would require extension results
as discussed in the appendix of \cite{MR0309184}. See also the
discussion following Example \ref{example:ExistenceLinearChange}.

As a specific application of our results, let us assume that we
are interested in constructing a statistical model for a nonexplosive counting
processes. We assume given a filtered probability space
$(\Omega,\FFF,(\FFF_t)_{t\ge0},P)$ and a $d$-dimensional counting process
$N$ such that under $P$, $N^i$ has intensity $\lambda^i_t = 1$. Fix a
timepoint $t$ and let us assume that we are interested in
considering a statistical model on the time interval $[0,t]$ based
on a family of intensities $(\mu_\theta)_{\theta\in\Theta}$. If
$\mu_\theta$ satisfies the criteria of Corollary \ref{coro:mainPoissonMGCrit}, we find that
$\EEE(H_\theta\cdot M)$ is a martingale, and so $\EEE(H_\theta\cdot M)_t$ has
unit mean. Letting $Q_\theta$ be the probability measure with Radon-Nikodym
derivative $\EEE(H_\theta\cdot M)_t$ with respect to $P$, it holds
that under $Q_\theta$, $N$ is a counting process with intensity, and the
intensity is $\mu_\theta$ on $[0,t]$. Furthermore, the family
$(Q_\theta)_{\theta\in\Theta}$ is dominated by $P$, and the
likelihood function is known in explicit form. Thus, Corollary
\ref{coro:mainPoissonMGCrit} has allowed us to construct the
statistical model and prove that explosion does not occur.

As regards checking the criteria in practice, an important property to
note is that the criteria only need to be checked locally, in the
sense that it is only necessary to find some $\eps>0$ such that the
criteria holds for $0\le u\le t$ with $t-u\le\eps$. This makes
it possible to apply the criteria in several
interesting situations. In particular, it allows us to extend the
criterion $(25)$ of \cite{MR2726223} from $a>1$ to $a\ge 1$, see Example \ref{example:ExistenceLinearChange}.

Instead of considering Theorem \ref{theorem:mainPointProcessMGCrit} as
a criterion for nonexplosion, we may also think of it simply as a sufficient criterion
for the Dol{\'e}ans-Dade exponential $\EEE(M)$ of a particular type of
local martingale $M$ to be a true martingale.  For $M$ a local
martingale with $\Delta M\ge-1$ and initial value zero, the question
of when $\EEE(M)$ is a uniformly integrable martingale or a true
martingale has been treated many times in the literature, see for
example \cite{MR0312567,MR699931,MR1299529,CS2001}
for results in the case of continuous $M$, and \cite{MR489492,MR547642,MR1932378,AS2013,KlebanerLipster2014} for results in the general case.

In particular, a considerable family of criteria related to this problem
is discussed in \cite{MR1932378}. We remark that the proof of Theorem
\ref{theorem:mainPointProcessMGCrit} applies Theorem III.1 and Theorem
III.7 of \cite{MR489492}. In the parlance of \cite{MR1932378}, Theorem III.1
of \cite{MR489492} corresponds to condition $I(0,1)$. There exists a
slight improvement of condition $I(0,1)$, namely condition $I(0,1-)$,
also proven in \cite{MR1932378}. Applying this condition instead of
condition $I(0,1)$ would not lead to significant improvements of our
results. We further remark that Theorem III.7 of \cite{MR489492} does not have an analogue in
the hierarchy of \cite{MR1932378}. In general, the criteria on which
the results of Theorem \ref{theorem:mainPointProcessMGCrit} are built
are among the strongest known, and optimality properties of these
criteria are known, see \cite{MR489492}. Therefore, we expect that no significant
improvements of Theorem \ref{theorem:mainPointProcessMGCrit} will be
possible. Furthermore, we remark that while the conditions (\ref{eq:MainCrit1}) and
(\ref{eq:CoroCrit1}) are rather straightforward localised versions of results
of \cite{MR489492}, the conditions (\ref{eq:MainCrit2}) and particularly
the simplified variant (\ref{eq:CoroCrit2}) do not have direct analogoues in
the hierarchy of \cite{MR1932378}.


The remainder of the paper is organized as follows. In Section
\ref{sec:Examples}, we give some examples of applications of the
results. In Section \ref{sec:SuffUI}, we present the proof of the main
results. Appendix \ref{sec:Proofs} contains supplementary results for
Section \ref{sec:Examples}.

\section{Examples}
\label{sec:Examples}

In this section, we give examples where the conditions in Theorem
\ref{theorem:mainPointProcessMGCrit} and Corollary
\ref{coro:mainPoissonMGCrit} may be verified. Our first example shows how Theorem
\ref{theorem:mainPointProcessMGCrit} under certain circumstances
allows for changes of the intensity where
the new intensity is an affine function of the old intensity. Such
criteria were also discussed in Theorem 2 of \cite{MR2817610}, where the new
intensity $\mu$ was assumed to be related to the initial intensity
$\lambda$ by the relationship $|\mu_t-\lambda_t|\le
\theta\sqrt{\lambda_t}$.

\begin{example}
\label{example:AffineInPreviousIntensity}
Assume that $d$ is equal to one. Assume that $\lambda_s\ge\delta$ for some $\delta>0$ and that $\mu_t\le\alpha+\beta\lambda_s$. If there is $\eps>0$ such that for
$0\le u\le t$ with $t-u\le\eps$, $\int_u^t\lambda_s\df{s}$ has an
exponential moment of order $(1+(\alpha\delta^{-1}+\beta)\log_+(\alpha\delta^{-1}+\beta))$, then $\EEE(H\cdot M)$ is a martingale.
\hfill$\circ$
\end{example}

\textbf{\textit{Proof of Example \ref{example:AffineInPreviousIntensity}:\quad}} By our assumptions, $\gamma_t = \alpha\lambda^{-1}_t+\beta\le
\alpha\delta^{-1}+\beta$. Using that $x\log
x-(x-1)\le 1+x\log x\le 1+x\log_+x$ for any $x\ge0$, we then obtain
\begin{align}
  \int_u^t (\gamma^i_s\log\gamma^i_s-(\gamma^i_s-1))\lambda_s\df{s}
  \le (1+(\alpha\delta^{-1}+\beta)\log_+(\alpha\delta^{-1}+\beta))\int_u^t \lambda_s\df{s},
\end{align}
so the first criterion of Theorem \ref{theorem:mainPointProcessMGCrit}
yields the result.
{\hfill $\Box$}

In the remainder of the examples, we will assume that $\lambda=1$,
such that $N$ is a $d$-dimensional standard Poisson process, and give
particular cases where Corollary \ref{coro:mainPoissonMGCrit} may be
applied. For Example \ref{example:Novikov} below, we first introduce
some notation. Let $X$ be a semimartingale. If the quadratic variation
process $[X]$ is locally integrable, the dual predictable projection $\Pi^*_p[X]$ is
well-defined, see Definition 5.21 of \cite{MR1219534} and Section III.5 of \cite{MR2273672}. In this case, we put $\langle X\rangle=\Pi^*_p[X]$ and
refer to $\langle X\rangle$ as the predictable quadratic variation
process of $X$. It then holds that $\langle X\rangle$ is predictable,
and in the case where $X$ is a locally square-integrable local
martingale, both $[X]-\langle X\rangle$ and $X^2-\langle X\rangle$ are
local martingales, see also Section I.4 of \cite{MR1943877}.

\begin{example}
\label{example:Novikov}
Assume that $\mu$ is a nonnegative, predictable and locally integrable
process, and assume that there is $\eps>0$ such that $\exp(\eps\langle
H\cdot M\rangle_t)$ is integrable for all $t\ge0$. In this case, the first
criterion of Corollary \ref{coro:mainPoissonMGCrit} may be applied to
show that $\EEE(H\cdot M)$ is a martingale.
\hfill$\circ$
\end{example}

\textbf{\textit{Proof of Example \ref{example:Novikov}:\quad}} Let $\eps>0$
be given such that $\exp(\eps\langle H\cdot M\rangle_t)$ is integrable
for all $t\ge0$. Pick $K>0$ so large that $x\log_+x\le \eps (x-1)^2$
holds for
$x\ge K$, then $E\exp(\sum_{i=1}^d\int_u^t
\mu^i_s\log_+\mu^i_s\df{s})\le\exp(dtC)E\exp(\eps\sum_{i=1}^d \int_0^t
(H^i_s)^2\df{s})$, where $C=\sup_{-1\le x\le K}x\log_+x$. As $N$ has
no common jumps, however, we have $[H\cdot M]_t=\sum_{i=1}^d\int_0^t
(H^i)_s^2\df{N^i}_s$. Therefore, as $H$ is predictable, we obtain
\begin{align}
  \langle H\cdot M\rangle_t 
  &=\Pi^*_p \sum_{i=1}^d\int_0^t (H^i_s)^2\df{N^i}_s
   = \sum_{i=1}^d \int_0^t (H^i_s)^2\df{\Pi^*_p N^i}_s
   = \sum_{i=1}^d \int_0^t (H^i_s)^2\df{s}.
\end{align}
All in all, we conclude
\begin{align}
    E\exp\left(\sum_{i=1}^d\int_u^t \mu^i_s\log_+\mu^i_s\df{s}\right)
  &\le \exp(dtC)E\exp\left(\eps\langle H\cdot M\rangle_t\right)<\infty,
\end{align}
and the result follows by Corollary \ref{coro:mainPoissonMGCrit}.
{\hfill $\Box$}

Example \ref{example:Novikov} is noteworthy because of the
following. In \cite{MR2574236}, applying the results of \cite{MR489492}, the following
Novikov-type criterion is demonstrated: If $M$ is a locally square
integrable local martingale with $\Delta M\ge-1$ and
$\exp(\frac{1}{2}\langle M^c\rangle_\infty+\langle
M^d\rangle_\infty)$ is integrable, then $\EEE(M)$ is a uniformly
integrable martingale. Here, $M^c$ and $M^d$ denote the continuous and
purely discontinuous parts of the local martingale, respectively, see
Theorem 7.25 of \cite{MR1219534}. Furthermore, in \cite{MR2574236} it is argued by example that
the constant 1 in front of $\langle M^d\rangle_\infty$ cannot in
general be exchanged with $1-\eps$ for any $\eps>0$. See also \cite{AS2013}
for more results of this type. Example \ref{example:Novikov}, however, shows that when proving the martingale
property instead of the uniformly integrable martingale property, for the particular type of
local martingale considered here, the constant 1 may in fact be
exchanged with any positive number.

\begin{example}
\label{example:ExistenceLinearChange}
Assume that $\mu$ is a nonnegative, predictable and locally integrable process satisfying $\mu^i_t\le\alpha+\beta\sum_{j=1}^d N^j_{t-}$. Then both criteria
of Corollary \ref{coro:mainPoissonMGCrit} may be applied to obtain
that $\EEE(H\cdot M)$ is a martingale.
\hfill$\circ$
\end{example}

\textbf{\textit{Proof of Example \ref{example:ExistenceLinearChange}:\quad}}
Consider first using the first moment condition of Corollary
\ref{coro:mainPoissonMGCrit}. As $x\log_+x$ is increasing in $x$, it suffices to consider the case
where $\alpha>1$ and $\beta>0$, such that $\mu$ is positive. Fix $\eps>0$, and let $0\le u\le
t$ with $t-u\le\eps$. We then obtain, with $N^S_t = \sum_{j=1}^d N^j_t$,
\begin{align}
  \exp\left(\sum_{i=1}^d\int_u^t\mu^i_s\log_+\mu^i_s\df{s}\right)
  &\le \exp\left(\eps d (\alpha+\beta N^S_t)\log(\alpha+\beta N^S_t)\right).
\end{align}
Now, for $k$ large ehough, $\eps d(\alpha+\beta k)\log(\alpha+\beta k)\le 4\eps d\beta k\log k$. Therefore, $\exp(\sum_{i=1}^d\int_u^t\mu^i_s\log\mu^i_s\df{s})$ is
integrable if only $\exp(4\eps d \beta N^S_t\log N^S_t)$ is
integrable. $N_t^S$ is Poisson distributed with parameter $dt$, so by
choosing $\eps$ with $4\eps d\beta<1$, we obtain the desired integrability
using Lemma \ref{lemma:PoissonExpMoment}. The first moment condition
of Corollary \ref{coro:mainPoissonMGCrit} now yields the result.

If we instead wish to use the second moment condition of Corollary
\ref{coro:mainPoissonMGCrit}, define a mapping
$\varphi:\NN_0\to\RR$ by
$\varphi(n)=E\exp(\int_0^{t-u}\log\beta(n+N^S_s)\df{N^S}_s)$. Let $m\in\NN$ such that $\alpha\le \beta m$, we then obtain $\alpha+\beta x \le
\beta(m+x)$. By a conditioning argument, it then holds that
$E\exp(\sum_{i=1}^d\int_u^t\log \mu^i_s\df{N^i}_s) \le
E\varphi(m+N_u^S)$. By elementary calculations and a standard
hypergeometric summation formula, see formula (15.1.8) of
\cite{MR757537}, we obtain that whenever
$\beta(t-u)d<1$, we have
\begin{align}
  \varphi(n)=\frac{\exp(-(t-u)d)}{(1-\beta(t-u)d)^{n+1}}.
\end{align}
Further calculations then yield the bound
\begin{align}
  E\exp\left(\sum_{i=1}^d\int_u^t\log \mu^i_s\df{N^i}_s\right)
  &\le\frac{1}{(1-\beta(t-u)d)^{m+1}}\exp\left(-td+\frac{ud}{1-\beta(t-u)d}\right).
\end{align}
We conclude that the second moment condition of Corollary \ref{coro:mainPoissonMGCrit} yields the
result, using $\eps$ such that $\beta\eps d<1$.
{\hfill $\Box$}

The above is the extension of criterion
$(25)$ of \cite{MR2726223} from $a>1$ to $a\ge 1$ mentioned
earlier. For the case of intensities predictable with respect to the filtration
generated by $N$, the existence of nonexplosive counting processes with intensities affinely bounded
by the total number of jumps as in Example \ref{example:ExistenceLinearChange} is well known, see
Example 4.4.5 of \cite{MR2189574}. The abstract construction of Example
\ref{example:ExistenceLinearChange} covers the general case of
intensities predictable with respect to $(\FFF_t)$ and yields a family $(\Omega,
\FFF_t, Q_t)_{t\ge0}$ of probability spaces such that $(N_s)_{s\le t}$ has
intensity $\mu$ on $[0,t]$ under $Q_t$, here $Q_t$ is the measure
with Radon-Nikodym derivative $\EEE(H\cdot M)_t$ with respect to $P$. Additional structure on
the probability space is needed to guarantee the existence of the
inverse limit $(\Omega, \sigma(\cup_{t\ge0} \FFF_t), Q)$ with
the restriction of $Q$ to $\FFF_t$ being equal to $Q_t$, such that $(N_s)_{s\ge0}$ has
intensity $\mu$ under $Q$, see \cite{MR0096768} and \cite{MR0309184}. Such
conditions can usually be assumed fulfilled in concrete cases by
working with a suitable canonical choice of $\Omega$, but we will not
pursue this any further.

If the process $\mu$ is exactly affine in the sense
that $\mu^i_t=\alpha+\beta\sum_{j=1}^d N^j_{t-}$, and $N$ is a
homogeneous Poisson process, the martingale
property of $\EEE(H\cdot M)$ may be obtained
by direct calculation. However, this does not in itself imply that the
same result holds when we only have the inequality $\mu^i_t\le
\alpha+\beta\sum_{j=1}^d N^j_{t-}$. In general, such ``monotonicity''
properties of the martingale property for exponential martingales do
not hold, see for example \cite{MR1299529}, Example 1.13.

Next, we consider two examples involving intensities given as
solutions to stochastic differential equations. Such intensities allow
for the construction of models with interacting counting processes and
diffusions as discussed in for example \cite{AzizEtAl2012}. In both cases, we
assume given a Brownian motion relative to the given filtration
$(\FFF_t)$, meaning in the $d$-dimensional case that $(W^i)^2_t-t$ is
an $(\FFF_t)$ martingale for $i\le d$ and $W^i_tW^j_t$ is an
$(\FFF_t)$ martingale for $i,j\le d$ with $i\neq j$. We denote such a
process an $(\FFF_t)$ Brownian motion. By L{\'e}vy's
characterisation of Brownian motion for general filtered probability
spaces, see Theorem IV.33.1 of \cite{MR1780932}, this requirement ensures
that the characteristic properties of the Brownian motion interact
well with the filtration $(\FFF_t)$. By $\MM(d,d)$, we denote the set
of $d\times d$ matrices with real entries.

\begin{example}
\label{example:DiffusionPointProcessCoef}
Consider mappings $A:\NN^d_0\times \RR^d_+\to\RR^d$, $B:\NN^d_0\times \RR^d_+\to\MM(d,d)$ and
$\sigma:\NN^d_0\times \RR^d_+\to\MM(d,d)$ such that for all $\eta\in\NN_0^d$,
$A(\eta,\cdot)$, $B(\eta,\cdot)$ and $\sigma(\eta,\cdot)$ are
continuous and bounded and such that $\sigma$ always is positive definite. With $T^i_n$ denoting the
$n$'th jump time for $N^i$ and $Z^i_t = t - T^i_{N^i_t}$, let $X$
be a solution to the $d$-dimensional stochastic differential equation
\begin{align}
 \df{X}_t &= (A(N_t,Z_t)+B(N_t,Z_t)X_t)\df{t} +
\sigma(N_t,Z_t) \df{W}_t
\end{align}
with initial value $x_0$ in $\RR^d$, where $W$ is an $(\FFF_t)$ Brownian motion independent
of $N$. Let $\phi:\RR^d\to\RR_+^d$ be
Lipschitz and put $\mu_t = \phi(X_t)$. Assume that there are $\delta>0$ and
$c_A,c_B,c_\sigma>0$ such that
\begin{align}
  \sup_{t\ge0}\|A(\eta,t)\|_2 &\le c_A\|\eta\|_1^{1-\delta}\\
  \sup_{t\ge0}\|\sigma(\eta,t)\|_2 &\le c_\sigma\|\eta\|_1^{(1-\delta)/2}\\
  \sup_{t\ge0}\|B(\eta,t)\|_2 &\le c_B,
\end{align}
where $\|\cdot\|_2$ in the
first case denotes the Euclidean norm and in the two latter cases
denote the operator norm induced by the Euclidean norm, and
$\|\cdot\|_1$ denotes the $\LLL^1$ norm in $\NN_0^d$. Then, the first criterion of
Corollary \ref{coro:mainPoissonMGCrit} may be applied to obtain that
$\EEE(H\cdot M)$ is a martingale.
\hfill$\circ$
\end{example}

The interpretation of this example is as follows. The evolution of the
diffusion process depends on the counting process with mean reversion
level, mean reversion speed and diffusion coefficient which are
deterministic between jumps. The $Q$-measure, as described in Lemma
\ref{lemma:PointProcessMeasChg}, yields a model with feedback from the diffusion to the counting
process by letting the intensity be a function of the diffusion.

\textbf{\textit{Proof of Example \ref{example:DiffusionPointProcessCoef}:\quad}} We
need to show that the first criterion of Corollary
\ref{coro:mainPoissonMGCrit} is applicable. We may assume without loss
of generality that $0<\delta<1$. It suffices to prove that
for any $t>0$, $E\exp(\sum_{i=1}^d\int_0^t
\mu^i_s\log_+\mu^i_s\df{s})$ is finite. Fix $t>0$. By Jensen's inequality, we find
\begin{align}
  E\exp\left(\sum_{i=1}^d\int_0^t \mu^i_s\log_+\mu^i_s\df{s}\right)
  \le\frac{1}{t}\int_0^t E\exp\left(t\sum_{i=1}^d\mu^i_s\log_+\mu^i_s\right)\df{s}.
\end{align}
We wish to bound the expectation inside the integral by an expression
depending continuously on $s$. Recall that we have
assumed that $\phi$ is Lipschitz, so there exists
$\gamma>0$ such that $\|\phi(x)\|_\infty\le \gamma\|x\|_2$, yielding $\phi^i(x)\le
\gamma\|x\|_2$ for all $i\le d$, and so $E\exp(t\sum_{i=1}^d\mu^i_s\log_+\mu^i_s) \le E\exp(t d\gamma\|X_s\|_2\log_+\gamma\|X_s\|_2)$.
Next, let $0<\zeta<1$. It holds for all $x\ge0$ that $\log_+x\le
\zeta^{-1}x^\zeta$. Therefore, defining $\rho=t
d\gamma^{1+\zeta}\zeta^{-1}$, we conclude
\begin{align}
  E\exp\left(t\sum_{i=1}^d\mu^i_s\log_+\mu^i_s\right)
  &\le E\exp\left(\rho\|X_s\|_2^{\zeta+1}\right).
\end{align}
We will calculate this expectation by conditioning on $N$. Let $\eta$
denote a point process path, and let $(\tau_n)$ denote the event times
of $\eta$. By the explicit representation in Lemma
\ref{lemma:DiffusionPointProcessCoefExistence} as well as the results
on pathwise stochastic integration in \cite{MR1327950}, it
holds that conditionally on $N=\eta$, $X_s$ has
the same distribution as $Y^\eta_s$, where
\begin{align}
  Y^\eta_s=
      C^{-1}_s\left(x_0+\int_0^s C_vA(\eta_v,v-\tau_{\eta_v})\df{v}+\int_0^s C_v\sigma(\eta_v,v-\tau_{\eta_v})\df{W}_v\right),
\end{align}
which is a normal distribution with mean $\xi^\eta_s$ and variance
$\Sigma^\eta_s$, where
\begin{align}
  \xi^\eta_s &= C^{-1}_s\left(x_0+\int_0^s C_vA(\eta_v,v-\tau_{\eta_v})\df{v}\right)\\
  \Sigma^\eta_s &= C^{-1}_s\int_0^s (C_v\sigma(\eta_v,v-\tau_{\eta_v}))^t(C_v\sigma(\eta_v,v-\tau_{\eta_v}))\df{s}(C^{-1})_s^t,
\end{align}
and where $C_s = \exp(-\int_0^s B(\eta_v,v-\tau_{\eta_v})\df{v})$. With $\|\cdot\|_2$ denoting the matrix operator norm induced by the
Euclidean norm, Lemma \ref{lemma:NormalExpPowerMoment} yields
\begin{align}
  &E\exp(\rho \|X_s\|_2^{1+\zeta})
  =\int E\left(\left.\exp(\rho \|X_s\|_2^{1+\zeta})\right|N=\eta\right)\df{N(P)}(\eta)\\
  &\le k_dE\exp(a(\rho,\zeta)\|\xi^N_s\|^{1+\zeta})\exp\left(b(\rho,\zeta)\|\Sigma_s^N\|_2^\frac{1+\zeta}{1-\zeta}\right),\notag
\end{align}
with $a$ and $b$ as in the statement of the lemma. Next, we consider bounds for $\|\xi_s^\eta\|$ and
$\|\Sigma^\eta_s\|_2$. Note that $\|C_s\|_2 \le \exp(\int_0^s
  \|B(\eta_v,v-\tau_{\eta_v})\|_2\df{v})\le\exp(s c_B)$, where we have
  applied standard norm inequalities, see Theorem
10.10 of \cite{MR2396439} and Lemma 1.4 of \cite{MR838085}, and similarly,
$\|C_s^{-1}\|_2\le \exp(sc_B)$. Therefore, recalling that $0<\delta<1$ so that $x\mapsto x^{1-\delta}$
is increasing,
\begin{align}
  \|\xi^\eta_s\|_2
  &\le\exp(sc_B)\left(\|x_0\|_2+sc_A\exp(sc_B)\|\eta_s\|_1^{1-\delta}\right).
\end{align}
Similarly, we obtain
\begin{align}
  \|\Sigma^\eta_s\|_2&\le s\exp(4sc_B)c_\sigma^2 \|\eta_s\|_1^{1-\delta}.
\end{align}
In particular, for appropriate continuous functions $a_\xi$, $b_\xi$
and $b_\Sigma$ from $\RR_+$ to $\RR$, depending on $\zeta$, we obtain
the two bounds
\begin{align}
  \|\xi^\eta_s\|_2^{1+\zeta}
  &\le a_\xi(s)+b_\xi(s)\|\eta_s\|_1^{(1-\delta)(1+\zeta)}\\
  \|\Sigma^\eta_s\|^{\frac{1+\zeta}{1-\zeta}}_2
  &\le b_\Sigma(s)\|\eta_s\|_1^{(1-\delta)\frac{1+\zeta}{1-\zeta}}.
\end{align}
We then conclude
\begin{align}
  & E\exp(\rho \|X_s\|_2^{1+\zeta})\notag\\
  &\le k_dE\exp\left(a(\rho,\zeta)\left(a_\xi(s)+b_\xi(s)\|N_s\|_1^{(1-\delta)(1+\zeta)}\right)+b(\rho,\zeta)b_\Sigma(s)\|N_s\|_1^{(1-\delta)\frac{1+\zeta}{1-\zeta}}\right)\notag\\
  &\le k_d\exp\left(a(\rho,\zeta)a_\xi(s)\right)E\exp\left((a(\rho,\zeta)b_\xi(s)+b(\rho,\zeta)b_\Sigma(s))\|N_s\|_1^{(1-\delta)\frac{1+\zeta}{1-\zeta}}\right).
\end{align}
The above depends on given constants $\delta$, $c_A$, $c_B$ and
$c_\sigma$, as well as the constant $\zeta$ which we may choose
arbitrarily in the open interval between zero and one. We now
choose $\zeta$ so small in $(0,1)$ that
$(1-\delta)(1+\zeta)(1-\zeta)^{-1}\le 1$. Recalling that for any Poisson
distributed variable $Z$ with intensity $\lambda$ and any $c\in\RR$,
it holds that $E\exp(cZ)=\exp((\exp(c)-1)\lambda)$, we may then conclude
\begin{align}
  & E\exp(\rho \|X_s\|_2^{1+\zeta})\notag\\
  &\le k_d\exp\left(a(\rho,\zeta)a_\xi(s)\right)E\exp\left((a(\rho,\zeta)b_\xi(s)+b(\rho,\zeta)b_\Sigma(s))\|N_s\|_1\right)\notag\\
  &= k_d\exp\left(a(\rho,\zeta)a_\xi(s)\right)\exp((\exp(a(\rho,\zeta)b_\xi(s)+b(\rho,\zeta)b_\Sigma(s))-1)ds).
\end{align}
All in all, we may now define, for $0\le s\le t$,
\begin{align}
  \varphi(s)=k_d\exp\left(a(\rho,\zeta)a_\xi(s)\right)\exp((a(\rho,\zeta)\exp(b_\xi(s)+b(\rho,\zeta)b_\Sigma(s))-1)ds),
\end{align}
and obtain $E\exp(t\sum_{i=1}^d\mu^i_s\log_+\mu^i_s) \le \varphi(s)$
for all such $s$. The functions $a_\xi$, $b_\xi$ and $b_\Sigma$
depends continuously on $s$. Therefore, $\varphi$ is a continuous
function of $s$. In particular, the integral of $\varphi$ over $[0,t]$ is finite. Recalling our first
estimates, this leads us to conclude that for any $t\ge0$, it holds
that $E\exp(\sum_{i=1}^d\int_0^t \mu^i_s\log_+\mu^i_s\df{s})$ is
finite, and so the first integrability criterion of Corollary
\ref{coro:mainPoissonMGCrit} is satisfied.
{\hfill $\Box$}

\begin{example}
\label{example:PointProcessDiffusion}
Let $(\xi_n)_{n\ge0}$, $(a_n)_{n\ge0}$ and $(b_n)_{n\ge0}$ be
sequences in $\RR$. Assume that $b_n\neq0$ for $n\ge0$ and assume that $X$
satisfies the one-dimensional stochastic differential equation
\begin{align}
  \df{X}_t = a_{N_t}+b_{N_t}X_t\df{t}+\sigma\df{W}_t + (\xi_{N_t}-X_{t-})\df{N}_t,
\end{align}
with initial value $\xi_0$ and $\sigma>0$, where $W$ is an $(\FFF_t)$ Brownian motion independent
of $N$. Put $\mu_t = |X_{t-}|$. Assume that there are $\alpha,\beta>0$
such that
\begin{align}
  |\xi_n| &\le \alpha+\beta n \\
  |a_n / b_n| &\le \alpha + \beta n \\
  |b_n| &\le \alpha.
\end{align}
Then, the second criteria of Corollary \ref{coro:mainPoissonMGCrit} may be applied to obtain that
$\EEE(H\cdot M)$ is a martingale.
\hfill$\circ$
\end{example}

The interpretation of this example is similar to Example
\ref{example:DiffusionPointProcessCoef} with the intensity under the
$Q$-measure evolving as the absolute value of a linear diffusion
process with constant coefficients between jumps. The intensity is,
however, in this example reset to the level $\xi_n$ at the $n$'th jump
of $N$.

\textbf{\textit{Proof of Example \ref{example:PointProcessDiffusion}:\quad}} We want to show that
the second moment condition of Corollary \ref{coro:mainPoissonMGCrit}
is applicable. To this end, we first construct an explicit solution to the stochastic differential equation
defining $X$. With $T_n$ denoting the $n$'th event time for $N$, define the process $W^n$
by $W^n_t = W_{T_n+t}-W_{T_n}$ and define $\FFF^n_t =
\FFF_{T_n+t}$. By Theorem I.12.1 of \cite{MR1796539}, $W^n$ is independent
of $\FFF_{T_n}$ and has the distribution of a Brownian motion. Again
using Theorem I.12.1 of \cite{MR1796539} with the stopping time $T_n+s$, we
have for $0\le s\le t$ that
\begin{align}
  E(W^n_t|\FFF^n_s)
  &=E(W_{T_n+t}-W_{T_n}|\FFF_{T_n+s})\notag\\
  &=E(W_{T_n+t}-W_{T_n+s}|\FFF_{T_n+s})+W_{T_n+s}-W_{T_n}\notag\\
  &=W_{T_n+s}-W_{T_n}
  =W^n_s,
\end{align}
and L{\'e}vy's characterisation Theorem for Brownian motion relative to a
filtration, see \cite{MR1780932}, Theorem IV.33.1, shows that $W^n$ is an
$(\FFF^n_t)$ Brownian motion. We may then use the It{\^o} existence and uniqueness theorem, see Theorem 11.2 of
\cite{MR1780932}, concluding that on the same probability space that carries the
Poisson process $N$, the Brownian motion $W$ and in particular the
$(\FFF^n_t)$ Brownian motion $W^n$, there exist unique processes $X^n$ satisfying the stochastic differential equations
$\df{X}^n_t=a_n+b_nX^n_t\df{t}+\sigma\df{W}^n_t$ with constant initial values
$\xi_n$. Whenever $T_n\le t<T_{n+1}$, we then have
\begin{align}
  X^n_{t-T_n}&=\xi_n+\int_0^{t-T_n} a_n+b_nX^n_s\df{s}
                   +\int_0^{t-T_n} \sigma\df{W^n}_s\notag\\
             &=\xi_n+\int_{T_n}^t a_n+b_nX^n_{s-T_n}\df{s}
                   +\int_{T_n}^t \sigma\df{W}_s.
\end{align}
The process $\sum_{n=0}^\infty
X^n_{t-T_n}1_{[T_n,T_{n+1})}(t)$ thus satisfies the same stochastic differential equation as $X$. By
pathwise uniqueness for each $X^n$, $X_t=\sum_{n=0}^\infty X^n_{t-T_n}1_{[T_n,T_{n+1})}(t)$.

The above deliberations yield an explicit representation for the
stochastic differential equation defining the intensity. Next, we check
that the second moment condition of Corollary \ref{coro:mainPoissonMGCrit} is applicable. With
$S_k=T_k-T_{k-1}$ denoting the sequence of interarrival times, we then
obtain for the moment condition to be investigated that
\begin{align}
  E\exp\left(\int_u^t \log_+|X_{s-}|\df{N_s}\right)
  &\le E\exp\left(\int_u^t \log(1+|X_{s-}|)\df{N_s}\right)\\
  &=E\prod_{k=N_u+1}^{N_t}(1+|X^{k-1}_{T_k-T_{k-1}}|)
  =E\prod_{k=N_u+1}^{N_t}(1+|X^{k-1}_{S_k}|).\notag
\end{align}
In order to obtain the finiteness of this expression, we wish to
condition on $N$. Given a counting process trajectory $\eta$, we refer
to the event times of $\eta$ by $(\tau_n)$, $\tau_0=0$, and we let $(s_n)$ be the
corresponding interarrival times, $s_n=\tau_n-\tau_{n-1}$. We then have
\begin{align}
  E\prod_{k=N_u+1}^{N_t}(1+|X^{k-1}_{S_k}|)
  &=\int E\left(\left.\prod_{k=\eta_u+1}^{\eta_t}(1+|X^{k-1}_{s_k}|)\right|N=\eta\right)\df{N(P)}(\eta).
\end{align}
Next, we argue that given $N$, the variables $(X^{k-1}_{s_k})_{k\ge1}$ are mutually
independent, in the sense that it $N(P)$ almost surely holds that the
conditional distribution of the variables $(X^{k-1}_{s_k})_{k\ge1}$
given $N=\eta$ is the product measure of each of the marginal
conditional distributions.

Applying Theorem V.10.4 of \cite{MR1780932} and the Doob-Dynkin Lemma, see the first
lemma of Section A.IV.3 of \cite{MR1814344}, there is a measurable
mapping $G_{k-1}:C[0,s_k]\to\RR$ such that $X^{k-1}_{s_k}$ is the
transformation under $G_{k-1}$ of the first $s_k$ coordinates of
$W^{k-1}$. We apply this result to obtain the conditional independence of
$X^{k-1}_{s_k}$ given $N=\eta$. As $X^{k-1}_{s_k}$ is a transformation
of $(W^{k-1})^{s_k}$, it will suffice to show that the processes $(W^{k-1})^{s_k}$ are conditionally
independent given $N=\eta$. To this end, we recall
that $W$ is independent of $N$, and note that $(W^{k-1})^{s_k}_t=W_{(\tau_{k-1}+t)\land
  \tau_k}-W_{\tau_{k-1}}$. Therefore, $(W^{k-1})^{s_k}$ is
$\FFF_{\tau_k}$ measurable. By Theorem I.12.1 of \cite{MR1796539}, $W^{k-1}$ is independent of $\FFF_{\tau_{k-1}}$. Inductively, it follows that conditionally on
$N=\eta$, the sequence of processes $(W^{k-1})^{s_k}$ are
mutually independent. Therefore, conditionally on $N$, the variables
$(X^{k-1}_{s_k})_{k\ge1}$ are mutually independent.

Applying this conditional independence, we may now conclude
\begin{align}
  E\prod_{k=N_u+1}^{N_t}(1+|X^{k-1}_{S_k}|)
  &=\int E\left(\left.\prod_{k=\eta_u+1}^{\eta_t}(1+|X^{k-1}_{s_k}|)\right|N=\eta\right)\df{N(P)}(\eta)\notag\\
  &=E\prod_{k=N_u+1}^{N_t}E(1+|X^{k-1}_{S_k}||N).
\end{align}
Next, we develop a simple bound for $E(|X^{k-1}_{S_k}||N)$. Consider
again a counting process path $\eta$, we then almost surely have
$E(|X^{k-1}_{S_k}||N=\eta)=E|X^{k-1}_{s_k}|$, where $X^{k-1}_{s_k}$
is given by $X^{k-1}_{s_k}=\xi_{k-1}+\int_0^{s_k}a_{k-1}+b_{k-1}X^{k-1}_t\df{t}+\sigma
W^{k-1}_{s_k}$. By (3.42) of \cite{MR1999614}, we then find that $X^{k-1}_{s_k}$ is normally distributed
with mean and variance given by
\begin{align}
  EX^{k-1}_{s_k}&=-\frac{a_{k-1}}{b_{k-1}}+\exp(s_kb_{k-1})\left(\xi_{k-1}+\frac{a_{k-1}}{b_{k-1}}\right).\\
  VX^{k-1}_{s_k}&=\sigma^2\int_0^{s_k}\exp(2b_{k-1}(s_k-u))\df{u}.
\end{align}
By our assumptions on $a_k$, $b_k$ and $\xi_k$, we then obtain
\begin{align}
  E|X^{k-1}_{s_k}|
  &\le |EX^{k-1}_{s_k}|
  +\sqrt{VX^{k-1}_{s_k}}E(X^{k-1}_{s_k}-EX^{k-1}_{s_k})/\sqrt{VX^{k-1}_{s_k}}\\
  &\le\left|\frac{a_{k-1}}{b_{k-1}}\right|+\exp(s_kb_{k-1})\left(|\xi_{k-1}|+\left|\frac{a_{k-1}}{b_{k-1}}\right|\right)+\sqrt{2/\pi}\sigma\sqrt{s_k}\exp(2s_k b_{k-1})\notag\\
  &\le\alpha+\beta (k-1)+2\exp(s_k\alpha)(\alpha+\beta (k-1))+\sqrt{2/\pi}\sigma\sqrt{s_k}\exp(2s_k\alpha).\notag
\end{align}
Therefore, we see that by defining $\alpha^*(v)=\alpha+2\alpha\exp(v\alpha)+\sqrt{2/\pi}\sigma\sqrt{v}\exp(2v\alpha)$
and $\beta^*(v)=\beta+2\beta\exp(v\alpha)$, we have
$E|X^{k-1}_{s_k}|\le\alpha^*(s_k)+\beta^*(s_k)(k-1)$. Next, note that
for $k\le N_t$, it holds that $T_k\le T_{N_t}\le t$. Therefore, for
any $k$ with $N_u+1\le k\le N_t$, it holds that $S_k\le t$. As $\alpha^*$ and
$\beta^*$ are increasing, we then find
\begin{align}
  E\prod_{k=N_u+1}^{N_t}E(1+|X^{k-1}_{S_k}||N)
  &\le E\prod_{k=N_u+1}^{N_t}\left(1+\alpha^*(S_k)+\beta^*(S_k)(k-1)\right)\notag\\
  &\le E\prod_{k=N_u+1}^{N_t}\left(1+\alpha^*(t)+\beta^*(t)(k-1)\right)\notag\\
  &= E\exp\left(\int_u^t \log(1+\alpha^*(t)+\beta^*(t)N_{s-})\df{N_s}\right).
\end{align}
Proceeding as in the the proof of Example
\ref{example:ExistenceLinearChange} using the second moment condition
of Corollary \ref{coro:mainPoissonMGCrit}, it follows that for
$\eps>0$ small enough and $0\le u\le t$ with $t-u\le\eps$, the above
is finite, and so the moment condition is satisfied.
{\hfill $\Box$}

Examples \ref{example:DiffusionPointProcessCoef} and \ref{example:PointProcessDiffusion} show how Corollary
\ref{coro:mainPoissonMGCrit} may be used to construct counting
processes with intensities not adapted to the filtration induced by
$N$ itself. Note that by Corollary 11.5.3 of \cite{MR2057928}, $W$ is always independent of
$N$, so the independence requirements in the above are mentioned only for
clarity. Also note that in Example
\ref{example:DiffusionPointProcessCoef}, the required bounds on the
coefficients hold independently of the norms on $\NN_0^d$, $\RR^d$ and $\MM(d,d)$
chosen, since all norms on finite-dimensional vector spaces are
equivalent.

Our next result, Example \ref{example:Hawkes}, yields a change of measure to a probability measure where the counting process is a
multidimensional Hawkes process. Such results are of interest in the context of models
such as those considered in e.g. \cite{CarstensenEtAl2010}. In general, many specifications of
$\phi$ and $h$ will yield exploding counting processes and there will
exist no measure change yielding the required intensity change.

\begin{example}
\label{example:Hawkes}
Consider mappings $\phi_i:\RR\to[0,\infty)$ and
$h_{ij}:[0,\infty)\to\RR$. Define
\begin{align}
  \mu^i_t=\phi_i\left(\sum_{j=1}^d\int_0^{t-} h_{ij}(t-s)\df{N}^j_s\right).
\end{align}
If $\phi_i$ is Borel measurable with $\phi_i(x)\le |x|$ and $h_{ij}$ is bounded, then $\EEE(H\cdot M)$ is a martingale.
\hfill$\circ$
\end{example}

\textbf{\textit{Proof of Example \ref{example:Hawkes}:\quad}} By Lemma
\ref{lemma:PredictableHawkesIntensity}, the process
$\sum_{j=1}^d\int_0^{t-}h_{ij}(t-s)\df{N}^j_s$ is predictable. As
$\phi_i$ is Borel measurable, it then follows that $\mu^i$ is predictable. As $\phi_i$ is
nonnegative, $\mu$ is nonnegative. And by stopping at event times, we find
that $\mu$ is locally bounded. Thus, $\mu$ is nonnegative, predictable and
locally bounded. Letting $c>0$ be such that
$\|h_{ij}\|_\infty\le c$ for all $i,j\le d$, we obtain
\begin{align}
  \mu^i_t \le  \left|\sum_{j=1}^d\int_0^{t-} h_{ij}(t-s)\df{N}^j_s\right|
  \le \sum_{j=1}^d\int_0^{t-} |h_{ij}(t-s)|\df{N}^j_s
  \le c \sum_{j=1}^d N^j_{t-},
\end{align}
and the result follows from Example \ref{example:ExistenceLinearChange}.
{\hfill $\Box$}

The above examples all give various types of sufficient criteria for
the martingale property of $\EEE(H\cdot M)$ using Corollary
\ref{coro:mainPoissonMGCrit}. As an aside, we may ask whether the
classical necessary and sufficient criterion for nonexplosion for
piecewise constant intensities, see Theorem 2.3.2 of \cite{MR1600720},
may be replicated as a criterion for the martingale property of
$\EEE(H\cdot M)$. The following example shows that this is the case.

\begin{example}
\label{example:Piecewise}
Let $d=1$, let $(\alpha_n)$ be a sequence of positive numbers and let
$\mu_t=\alpha_{N_{t-}}$. Then $\EEE(H\cdot M)$ is a
martingale if and only if $\sum_{n=0}^\infty\frac{1}{\alpha_n}$ is divergent.
\hfill$\circ$
\end{example}

\textbf{\textit{Proof of Example \ref{example:Piecewise}:\quad}} Let $T_n$ be the $n$'th jump time of $N$, then $(T_n)$ is a localising
sequence. We have
\begin{align}
  E\EEE(\mu \cdot M-M)_{T_n}
  &=E\exp\left(T_n-\int_0^{T_n}\mu_s\df{s}+\int_0^{T_n}\log \mu_s\df{N}_s\right)\notag\\
  &=E\exp\left(-\sum_{k=1}^n(\alpha_{k-1}-1)(T_n-T_{n-1})+\sum_{k=1}^n\log\alpha_{k-1}\right)\notag\\
  &=\prod_{k=1}^n\alpha_{k-1}(1-(1-\alpha_{k-1}))^{-1}
  =1,
\end{align}
so $\EEE(M)^{T_n}$ is a uniformly integrable martingale by Lemma \ref{lemma:SimpleEMCrit}. Therefore, by Lemma
\ref{lemma:LocalLimitEMCrit}, $\EEE(\mu\cdot M-M)$ is a martingale
if and only if $\lim_n E\EEE(\mu\cdot M-M)_{T_n}1_{(T_n\le t)}$ is zero for all
$t\ge0$. Now let $(\Omega',\FFF',P')$ be an auxiliary probability
space endowed with a sequence $(U_n)$ of independent exponentially
distributed variables, where $U_n$ has intensity $\alpha_n$. Let $P_n$
be the measure with Radon-Nikodym derivative $\EEE(\mu\cdot M-M)_{T_n}$ with
respect to $P$. By Lemma \ref{lemma:PointProcessMeasChg}, under $P_n$,
$N$ has intensity $\mu1_{[0,T_n]}+1_{(T_n,\infty)}$. In particular, the distribution
of $T_n$ under $P_n$ is then the same as the distribution of
$\sum_{k=1}^nU_k$ under $P'$, and so
\begin{align}
  \lim_n E\EEE(M)_{T_n}1_{(T_n\le t)}
  &=\lim_n P_n(T_n\le t)\notag\\
  &=\lim_n P'\left(\sum_{k=1}^n U_k\le t\right)
  =P'\left(\sum_{k=1}^\infty U_k\le t\right),
\end{align}
since $\cap_{n=1}^\infty (\sum_{k=1}^n U_k\le t)=(\sum_{k=1}^\infty
U_k\le t)$. Now, as $\sum_{k=1}^\infty \frac{1}{\alpha_k}$ diverges, Theorem
2.3.2 of \cite{MR1600720} shows that $\sum_{k=1}^\infty U_k$ is almost
surely infinite, so $P'(\sum_{k=1}^\infty U_k\le t)=0$. The result now follows from Lemma \ref{lemma:LocalLimitEMCrit}.
{\hfill $\Box$}

\section{Proofs of the main results}
\label{sec:SuffUI}

In this section, we present the proofs of the results stated in
Section \ref{sec:Summary}. We begin by recalling two 
folklore results on supermartingales and exponential martingales.

\begin{lemma}
\label{lemma:SimpleSuperMGCrit}
Let $X$ be a nonnegative supermartingale. Then $X$ is a uniformly
integrable martingale if and only if $EX_\infty=EX_0$, and $X$ is a
martingale if and only if it holds for all $t\ge0$ that $EX_t=EX_0$.
\end{lemma}

Recall that if $M$ is a local martingale with $\Delta M\ge-1$ and
initial value zero, $\EEE(M)$ is a nonnegative local martingale and a
supermartingale, $E\EEE(M)_t\le 1$ and $\EEE(M)_\infty$ always exists
as an almost sure limit with $E\EEE(M)_\infty\le 1$. Applying Lemma
\ref{lemma:SimpleSuperMGCrit} to the case of Dol{\'e}ans-Dade
exponentials then yields the following useful result.

\begin{lemma}
\label{lemma:SimpleEMCrit}
Let $M$ be a local martingale with $\Delta M\ge-1$ and initial value zero. $\EEE(M)$ is a uniformly integrable martingale if and only if $E\EEE(M)_\infty=1$, and
$\EEE(M)$ is a martingale if and only if $E\EEE(M)_t=1$ for all $t\ge0$.
\end{lemma}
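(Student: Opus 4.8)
The plan is to write $Z=\EEE(M)$ and to extract everything from the structural facts already recorded in Lemma \ref{lemma:EMBasicProps}: $Z$ is a nonnegative supermartingale with $Z_0=1$ and $EZ_t\le 1$, and the almost sure limit $Z_\infty=\lim_{t\to\infty}Z_t$ exists with $EZ_\infty\le 1$. Since $Z$ is a supermartingale, $t\mapsto EZ_t$ is nonincreasing, starting from $EZ_0=1$. The single algebraic fact that drives both equivalences is that a supermartingale with constant expectation is a martingale: if $s\le t$ then $Z_s-E[Z_t\mid\FFF_s]\ge 0$, and this nonnegative random variable has expectation $EZ_s-EZ_t$, so $EZ_s=EZ_t$ forces $Z_s=E[Z_t\mid\FFF_s]$ almost surely.

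I would prove the martingale criterion first, since it is the cleaner of the two and does not involve $Z_\infty$. If $Z$ is a martingale then $EZ_t=EZ_0=1$ for all $t$, giving one implication at once. Conversely, if $EZ_t=1$ for all $t\ge 0$, then $t\mapsto EZ_t$ is constant, so the supermartingale $Z$ is a martingale by the fact above. This settles the second equivalence.

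For the uniform integrability criterion I would argue as follows. If $Z$ is a uniformly integrable martingale, then $Z_t\to Z_\infty$ in $L^1$, so $EZ_\infty=\lim_{t\to\infty}EZ_t=EZ_0=1$. Conversely, suppose $EZ_\infty=1$. Because $t\mapsto EZ_t$ is nonincreasing with $EZ_0=1$ and limit $EZ_\infty=1$, it is identically $1$; by the martingale criterion just proved, $Z$ is a martingale. Now $Z_t\ge 0$, $Z_t\to Z_\infty$ almost surely, and $EZ_t=1\to 1=EZ_\infty$, so Scheff\'e's lemma (applied along every sequence $t_n\to\infty$) yields $Z_t\to Z_\infty$ in $L^1$ as $t\to\infty$. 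An $L^1$-convergent martingale is uniformly integrable, equivalently it is closed by $Z_t=E[Z_\infty\mid\FFF_t]$, which completes the equivalence.

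The only delicate point is the reverse direction of the uniform integrability criterion, where one must upgrade the mere equality $EZ_\infty=1$ to genuine uniform integrability. The mechanism is the combination of two inputs already available: the monotonicity of $t\mapsto EZ_t$, used to force $EZ_t\equiv 1$ and hence the martingale property, and the almost sure convergence of $Z_t$ supplied by Lemma \ref{lemma:EMBasicProps}, used to activate Scheff\'e's lemma and pass from convergence of means to $L^1$-convergence. Everything else is immediate from the supermartingale structure.
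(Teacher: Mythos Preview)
Your argument is correct. The paper itself does not prove this lemma: it states that the results are folklore (citing p.~140 of Protter for the continuous case) and explicitly omits the proof, so there is no paper argument to compare against. Your route via the supermartingale structure from Lemma~\ref{lemma:EMBasicProps}, the constant-expectation criterion for a supermartingale to be a martingale, and Scheff\'e's lemma to upgrade almost sure convergence with matching means to $L^1$-convergence is exactly the standard way to establish this.

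One small point worth making explicit in the reverse direction of the uniform integrability criterion: when you write that $t\mapsto EZ_t$ has ``limit $EZ_\infty=1$'', this needs a word of justification, since a priori $\lim_{t\to\infty}EZ_t$ and $EZ_\infty$ need not coincide for a supermartingale. Fatou's lemma gives $EZ_\infty\le\lim_{t\to\infty}EZ_t$, and combined with $EZ_t\le 1$ for all $t$ and the assumption $EZ_\infty=1$ this forces $\lim_{t\to\infty}EZ_t=1$; monotonicity then yields $EZ_t\equiv 1$ as you claim. With that one-line addition the proof is complete.
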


Now consider given a $d$-dimensional nonexplosive counting process $N$ with
nonnegative, predictable and locally bounded intensity $\lambda$ as well as another
nonnegative, predictable and locally bounded process $\mu$ which is
$\lambda$-compatible. As in Section \ref{sec:Summary}, $M$ is the $d$-dimensional local martingale defined by $M^i_t = N^i_t -
\int_0^t\lambda^i_s\df{s}$. Furthermore, we also use the notation that $\gamma^i=\mu^i_t(\lambda^i_t)^{-1}$
and $H^i_t=\gamma_t^i-1$. Recall that the assumption that $\mu$ is
$\lambda$-compatible by convention implies that both $\gamma$ and $H$ are locally bounded. Integrals are vector integrals in the sense that
$H\cdot M$ denotes the one-dimensional process defined by $H\cdot M =
\sum_{i=1}^d H^i\cdot M^i$.

We first prove Lemma \ref{lemma:PointProcessMeasChg}, the result
stated in Section \ref{sec:Summary} as the reason for taking interest
in the martingale property of $\EEE(H\cdot M)$ when considering
changing the intensity of a counting process. Recall that $\Pi^*_p$
denotes the dual predictable projection, see Definition 5.21 of
\cite{MR1219534}.

\begin{lemma}
\label{lemma:GirsanovMart}
Let $M$ be a local martingale with $\Delta M\ge-1$ and let $T$ be a
stopping time. Assume that $\EEE(M)^T$ is a uniformly integrable
martingale. Let $Q$ be the probability measure having Radon-Nikodym derivative $\EEE(M)_T$ with
respect to $P$. If $L$ is a local martingale under $P$ such that $[L,M^T]$ is locally integrable under $P$, then
$L-\langle L,M^T\rangle$ is a local martingale under $Q$, where the
angle bracket is calculated under $P$.
\end{lemma}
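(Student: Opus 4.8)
The plan is to reduce the statement to the classical Girsanov theorem for local martingales under a change of measure, the only subtlety being that $Q$ is absolutely continuous with respect to $P$ but generally not equivalent, and that the density process is the stopped exponential $\EEE(M)^T$ rather than $\EEE(M)$ itself. First I would let $Z=\EEE(M)^T$, which by hypothesis is a uniformly integrable martingale with $Z_\infty$ serving as the Radon--Nikodym derivative $\df{Q}/\df{P}$; since $Z$ solves $Z=1+\int_0^\cdot Z_{s-}\df{(M^T)}_s$, the density process is $(Z_t)_{t\ge0}$ and $Z_t=E(Z_\infty\mid\FFF_t)$. Because $\Delta M\ge-1$, $Z$ is nonnegative, and I would note that $Q\ll P$ but not necessarily the reverse, so that one must work with the right-continuous version of the density and invoke the form of Girsanov's theorem that does not require equivalence (e.g. the Girsanov--Meyer theorem as in \citep{P2005}, Theorem III.40, or the Jacod--Shiryaev formulation, valid under mere absolute continuity on each $\FFF_t$).

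Next I would verify the integrability hypothesis needed to apply that theorem. The classical statement requires that $L$ be a local $P$-martingale and that the bracket between $L$ and the density-generating local martingale be locally integrable; here the density process is driven by $M^T$, and the hypothesis $[L,M^T]$ locally integrable under $P$ is exactly what is assumed. Under these conditions the Girsanov--Meyer theorem asserts that $L-\langle L,M^T\rangle$ is a local martingale under $Q$, where $\langle L,M^T\rangle$ denotes the dual predictable projection (compensator) of $[L,M^T]$ computed under $P$; this is precisely the claimed conclusion. I would spell out that the predictable bracket $\langle L,M^T\rangle$ is well-defined precisely because $[L,M^T]$ is locally integrable, and that it is unaffected by the passage from $P$ to $Q$ since compensators of locally integrable variation processes are preserved up to $Q$-indistinguishability on the support of $Q$, which is all that is needed for the $Q$-local-martingale property.

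The one point requiring a little care---and the step I expect to be the main obstacle---is the absence of equivalence: a $P$-local martingale need not remain a local martingale under an absolutely continuous $Q$ unless one subtracts the correct drift, and one must make sure the localizing sequence can be chosen to work simultaneously under $P$ and $Q$. I would handle this by choosing a localizing sequence $(S_n)$ of stopping times that reduces $L$, makes $[L,M^T]^{S_n}$ of integrable variation under $P$, and makes $\EEE(M)^{T\wedge S_n}$ (equivalently $Z^{S_n}$) a true uniformly integrable $P$-martingale; along such a sequence the stopped processes are honest martingales and the Girsanov computation $E_Q[(L-\langle L,M^T\rangle)_{S_n\wedge t}\mid\FFF_r]=(L-\langle L,M^T\rangle)_{S_n\wedge r}$ follows from the Bayes formula $E_Q[\,\cdot\mid\FFF_r]=Z_r^{-1}E_P[\,\cdot\,Z_t\mid\FFF_r]$ on $\{Z_r>0\}$ together with integration by parts applied to $Z(L-\langle L,M^T\rangle)$, whose drift vanishes by the very definition of $\langle L,M^T\rangle$. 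Letting $n\to\infty$ gives that $L-\langle L,M^T\rangle$ is a local martingale under $Q$, which is the assertion. This is the standard argument; since the excerpt states that such results are folklore, I would present it compactly, citing \citep{P2005} for the Girsanov--Meyer theorem and merely indicating the choice of localizing sequence.
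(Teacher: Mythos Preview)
Your approach is essentially the same as the paper's: both reduce the lemma to the predictable Girsanov theorem in \citep{P2005} applied with density process $Z=\EEE(M)^T$. The paper invokes Theorem III.41 rather than III.40, and it makes explicit the one computation you gloss over: the theorem as stated there requires $[L,Z]$ (not $[L,M^T]$) to be locally integrable and yields the correction term $\int_0^\cdot Z_{s-}^{-1}\df{\langle Z,L\rangle}_s$, so one must use $Z=1+Z_-\cdot M^T$ to see that $[L,Z]=Z_-\cdot[L,M^T]$ is locally integrable (since $Z_-$ is locally bounded) and that $\int Z_{s-}^{-1}\df{\langle Z,L\rangle}_s=\langle L,M^T\rangle$. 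You have the key ingredient for this (you note $Z=1+\int Z_{s-}\df{(M^T)}_s$), so the gap is cosmetic. The paper also flags that $Z_0=1$ almost surely, which is needed because Protter's theorem formally assumes $\FFF_0$ is $P$-trivial; you do not mention this, but it is a one-line observation.
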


\begin{proof}
Assume given a process $L$ which is a local martingale under $P$ such
that $[L,M^T]$ is locally integrable under $P$. We then find that Theorem
III.41 of \cite{MR2273672} applies and yields that the process given by $L_u -
\int_0^u\EEE(M^T)_{s-}^{-1}\df{\langle \EEE(M^T),L\rangle_s$ is a $Q$ local martingale, where the angle bracket is calculated under
  $P$. Noting that
\begin{align}
  L_u - \int_0^u \frac{1}{\EEE(M^T)_{s-}}\df{\langle \EEE(M^T),L\rangle}_s}
  &=L_u -\int_0^u \frac{1}{\EEE(M^T)_{s-}}\df{\langle \EEE(M^T)_-\cdot M^t,L\rangle}_s\notag\\
  &=L_u - \langle L,M^T\rangle_u,
\end{align}
the result follows.
\end{proof}

\textbf{\textit{Proof of Lemma \ref{lemma:PointProcessMeasChg}:\quad}}
Fix a stopping time $T$. By definition, $Q$ has Radon-Nikodym derivative $\EEE(H\cdot M)_T$ with
respect to $P$. We wish to apply Lemma \ref{lemma:GirsanovMart}. We first check that $[M^i,(H\cdot M)^T]$ is
locally integrable under $P$. Note that $[M^i,M^j]_t = \sum_{0<s\le t}\Delta M^i_s\Delta
M^j_s=\sum_{0<s\le t}\Delta N^i_s\Delta N^j_s=[N^i,N^j]$, since $M^i$ has finite
variation,  in particular $[M^i]=N^i$. As the coordinates of $N$ have
no common jumps, we have $[M^i,(H\cdot M)^T]=H^i1_{[0,T]}\cdot
[N^i]$. Because we have assumed that $H$ is locally bounded, this is locally
integrable. From Lemma \ref{lemma:GirsanovMart}, we then conclude that $M^i- \langle M^i,(H\cdot
M)^T\rangle$ is a local martingale under $Q$. Next, under $P$,
we have $(\Pi^*_pN^i)_t = \int_0^t \lambda^i_s\df{s}$, and $H$ and $1_{[0,T]}$
are predictable. Therefore, we obtain $\langle M^i,(H\cdot M)^T\rangle_s =
\Pi^*_p(H^i1_{[0,T]}\cdot [N^i])_s=\int_0^s H^i_u1_{(u\le
  T)}\lambda_u^i\df{u}$, which allows us to conclude that
\begin{align}
  M^i_s-\langle M^i, (H\cdot M)^T\rangle_s
  &=N^i_s-\int_0^s\lambda^i_s\df{s}-\int_0^s H^i_u1_{(u\le T)}\lambda^i_u \df{u}\notag\\
  &=N^i_t-\int_0^s \mu^i_u1_{[0,T]}(u)+\lambda^i_u1_{(T,\infty)}(u)\df{u}.
\end{align}
This proves that under $Q$, $N$ has intensity
$1_{[0,T]}\mu+1_{(T,\infty)}\lambda$. The results for the case where
$\EEE(H\cdot M)$ is a martingale then follows by considering stopping
times which are constant.
{\hfill $\Box$}

Next, we prove Lemma \ref{lemma:QPEquivalence}, which yields a
sufficient criterion for the probability measure $Q$ constructed using an exponential
martingale to be equivalent to our starting probability measure $P$. For this
purpose, we first require the following lemma.

\begin{lemma}
\label{lemma:LebesgueNullZeroes}
Let $N$ have intensity $\lambda$. If $X$ is a process which is nonnegative,
predictable and locally bounded, and it holds almost surely that pathwisely, the set
of zeroes of $X$ has Lebesgue measure zero, then it almost surely
holds that the zeroes of $X$ are disjoint from the jump times of
$N^i$ for all $i$.
\end{lemma}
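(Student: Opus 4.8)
The plan is to fix $i\le d$ and show that the increasing process
$Y^i_t=\int_0^t 1_{(X_s=0)}\df{N^i_s}=\sum_{0<s\le t}1_{(X_s=0)}\Delta N^i_s$ is indistinguishable from zero; taking the union over $i$ then gives the claim, since $Y^i\equiv0$ says precisely that no jump time of $N^i$ is a zero of $X$. The first observation is that, as $X$ is predictable, the set $\{(\omega,s)\mid X_s(\omega)=0\}$ belongs to the predictable $\sigma$-algebra, so $1_{(X=0)}$ is a predictable process, and being bounded by $1$ it is in particular locally bounded. Also, since $N$ is nonexplosive, $Y^i_t\le N^i_t<\infty$, so $Y^i$ is a well-defined nonnegative, nondecreasing, c\`{a}dl\`{a}g process with $Y^i_0=0$.

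Next I would identify the compensator of $Y^i$. Since $N$ has intensity $\lambda$, the process $M^i_t=N^i_t-\int_0^t\lambda^i_s\df{s}$ is a local martingale, and as $1_{(X=0)}$ is predictable and locally bounded, $1_{(X=0)}\cdot M^i$ is again a local martingale; equivalently, $Y^i_t-\int_0^t 1_{(X_s=0)}\lambda^i_s\df{s}$ is a local martingale. The crucial point is that $\int_0^t 1_{(X_s=0)}\lambda^i_s\df{s}=0$ almost surely for every $t\ge0$: outside a null set the path $s\mapsto X_s$ has a zero set of Lebesgue measure zero by hypothesis, while $\lambda^i$ is almost surely integrable on compacts, so for such $\omega$ the integrand $1_{(X_s(\omega)=0)}\lambda^i_s(\omega)$ vanishes for Lebesgue-almost every $s$, and hence the integral is $0$. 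Consequently $Y^i$ is itself a nonnegative local martingale.

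Finally, $Y^i$ is a nonnegative, nondecreasing, c\`{a}dl\`{a}g local martingale with $Y^i_0=0$. Choosing a localizing sequence $(T_n)$ increasing almost surely to infinity, the identity $EY^i_{t\wedge T_n}=Y^i_0=0$ together with $Y^i_{t\wedge T_n}\ge0$ forces $Y^i_{t\wedge T_n}=0$ almost surely; letting $n\to\infty$ gives $Y^i_t=0$ almost surely for each fixed $t$, and by right-continuity $Y^i\equiv0$ off a null set. The only points requiring a little care are the two measure-theoretic facts used, namely that $\{X=0\}$ is a predictable set (so that $1_{(X=0)}\cdot M^i$ is meaningful and decomposes in the obvious way) and that integrating the locally integrable process $\lambda^i$ over the pathwise Lebesgue-null set $\{s\mid X_s=0\}$ returns zero; neither is a genuine obstacle, so I do not expect any hard step.
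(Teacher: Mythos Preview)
Your argument is correct and follows essentially the same route as the paper: you observe that $1_{(X=0)}$ is predictable, that $1_{(X=0)}\cdot M^i$ is a local martingale, that the Lebesgue-integral part $\int_0^t 1_{(X_s=0)}\lambda^i_s\df{s}$ vanishes by the hypothesis on the zero set, and then localize to conclude that the nonnegative process $\int_0^t 1_{(X_s=0)}\df{N^i_s}$ is identically zero. The only cosmetic difference is that the paper localizes first and then takes expectations of the decomposed integral, whereas you first identify $Y^i$ itself as a nonnegative local martingale starting at zero and then localize; the substance is the same.
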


\begin{proof}
As $X$ is predictable, the set of zeroes of $X$ is a
predictable set. Thus, the integral process $\int_0^t 1_{(X_s=0)}\df{M^i}_s$ is a local
martingale. Let $(T_n)$ be a localising sequence such that $\int_0^t
1_{(X_s=0)}1_{(t\le T_n)}\df{N^i}_s$ is bounded and $\int_0^t
1_{(X_s=0)}1_{(t\le T_n)}\df{M^i}_s$ is a true martingale. Then $E\int_0^t
1_{(X_s=0)}1_{(t\le T_n)}\df{M^i}_s=0$, and so by our assumptions,
$E\int_0^t 1_{(X_s=0)}1_{(t\le T_n)}\df{N^i_s}=0$ as well, leading us
to conclude that $\int_0^\infty 1_{(X_s=0)}\df{N^i_s}$ is
almost surely zero. This implies that almost surely, the set of zeroes
of $X$ is disjoint from the jump times of $N^i$. As the coordinate
$i$ was arbitrary, the result follows.
\end{proof}

\textbf{\textit{Proof of Lemma \ref{lemma:QPEquivalence}:\quad}}
Note that $\Delta(H\cdot M)_t = \sum_{i=1}^d H^i_t\Delta N^i_t$. By
Lemma \ref{lemma:LebesgueNullZeroes}, the set of zeroes of $\mu^i$ is
disjoint from the jump times of $N^i$. Therefore, the set of zeroes of
$\gamma^i$ is disjoint from the jump times of $N^i$ as well, and so
the set where $H^i$ is $-1$ is disjoint from the jump times of
$N^i$. We conclude that almost surely, $H\cdot M$ has no jumps of size
$-1$. Theorem I.4.61 of \cite{MR1943877} then shows that $\EEE(H\cdot M)$ is
almost surely positive.
{\hfill $\Box$}

Finally, we prove Theorem \ref{theorem:mainPointProcessMGCrit} and its
corollary. We first state the two main theorems of \cite{MR489492} which we will
apply to integrals of compensated counting processes in order to
obtain our results. The two main theorems from that article are Theorem
III.1 and Theorem III.7, given below.

\begin{theorem}
\label{theorem:LepingleMemin}
Let $M$ be a local martingale with initial value zero and jumps satisfying $\Delta
M\ge -1$. Let $R=\inf\{t\ge0\mid \Delta M_t = -1\}$. Define $B$ by putting $B_t = \frac{1}{2}[M^c]_{t\land
  R}+\sum_{0<s\le t\land R}(1+\Delta
M_s)\log(1+\Delta M_s)-\Delta M_s$. If $B$ is locally integrable and $\exp(\Pi^*_pB_\infty)$ is
integrable, then $\EEE(M)$ is a uniformly integrable martingale.
\end{theorem}

\begin{theorem}
\label{theorem:LepingleMemin2}
Let $M$ be a local martingale with initial value zero and $\Delta
M>-1$. Define $A$ by putting $A_t=\frac{1}{2}[M^c]_t+\sum_{0<s\le t}\log(1+\Delta M_s)-\frac{\Delta
  M_s}{1+\Delta M_s}$. If $\exp(A_\infty)$ is integrable, then $\EEE(M)$ is
a uniformly integrable martingale.
\end{theorem}

The following two lemmas are ingredients for the proof of Theorem
\ref{theorem:mainPointProcessMGCrit}. The first lemma allows us to
restrict our attention to small deterministic time intervals when proving the martingale property of
exponential martingales. This technique is well-known, see for example
Corollary 3.5.14 of \cite{MR917065}, and so we omit the proof. The second lemma decomposes
an exponential martingale into the product of two exponential
martingales, corresponding to successive changes of intensity from
$\lambda$ to $\mu$ and $\mu$ to $\mu+\nu$. This will, colloquially speaking, allow us to consider
the large and small parts of $\mu$ separately when proving the martingale property.

\begin{lemma}
\label{lemma:epsMGCrit}
Let $M$ be a local martingale with $\Delta M\ge-1$, and let
$\eps>0$. If $\EEE(M^t-M^u)$ is a martingale whenever $0\le u\le
t$ with $t-u\le\eps$, where $M^t$ denotes the process $M$ stopped at
time $t$, then $\EEE(M)$ is a martingale.
\end{lemma}

\begin{lemma}
\label{lemma:SuccessiveIntensityChange}
Let $\nu$ be nonnegative, predictable and locally bounded. Assume that
$\mu$ is $\lambda$-compatible and that $\mu+\nu$ is $\mu$-compatible. Then $\mu+\nu$ is also
$\lambda$-compatible. Define three processes $(H_\lambda^{\mu+\nu})^i_t=(\mu^i_t+\nu^i_t)(\lambda_t^i)^{-1}-1$,
$(H_\lambda^\mu)^i_t=\mu^i_t(\lambda^i_t)^{-1}-1$ and
$(H_\mu^{\mu+\nu})^i_t=(\mu^i_t+\nu^i_t)(\mu^i_t)^{-1}-1$. Define
$d$-dimensional processes $M^\lambda$ and $M^\mu$ by putting $(M^\lambda)^i_t =
N^i_t-\int_0^t \lambda^i_s\df{s}$ and $(M^\mu)^i=N^i_t-\int_0^t
\mu^i_s\df{s}$. It then holds that $\EEE(H_\lambda^{\mu+\nu}\cdot M^\lambda)=\EEE(H_\lambda^\mu\cdot
M^\lambda)\EEE(H_\mu^{\mu+\nu}\cdot M^\mu)$.
\end{lemma}

\begin{proof} That $\mu+\nu$ is
$\lambda$-compatible follows as $\mu+\nu$ is $\mu$-compatible and
$\mu$ is $\lambda$-compatible. Furthermore, $M^\lambda$ and $M^\mu$
are processes of finite variation, so we find $[H_\lambda^\mu\cdot
M^\lambda,H_\mu^{\mu+\nu}\cdot M^\mu]_t=\sum_{i=1}^d\int_0^t
(H_\lambda^\mu)^i_s (H_\mu^{\mu+\nu})^i_s\df{N^i}_s$. Therefore, we obtain $\EEE(H_\lambda^\mu\cdot M^\lambda)\EEE(H_\mu^{\mu+\nu}\cdot M^\mu)
  =\EEE(H_\lambda^\mu\cdot M^\lambda+H_\mu^{\mu+\nu}\cdot
  M^\mu+H_\lambda^\mu H_\mu^{\mu+\nu}\cdot N)$ by Theorem II.38 of
  \cite{MR2273672}. By elementary calculations,
\begin{align}
  H_\lambda^\mu\cdot M^\lambda+H_\mu^{\mu+\nu}\cdot
  M^\mu+H_\lambda^\mu H_\mu^{\mu+\nu}\cdot N
  &=H^{\mu+\nu}_\lambda \cdot M^\lambda,
\end{align}
and so the result holds.
\end{proof}

\textbf{\textit{Proof of Theorem \ref{theorem:mainPointProcessMGCrit}:\quad}}
By Lemma \ref{lemma:epsMGCrit}, it suffices to show the martingale
property of $\EEE((H\cdot M)^t-(H\cdot M)^u)$ when $0\le u\le t$ with
$t-u\le\eps$. Let such a pair of $u$ and $t$ be
given and let $L=(H\cdot M)^t-(H\cdot M)^u$. With $R$ and $B$ as in
Theorem \ref{theorem:LepingleMemin}, we have for $r\ge0$ that
\begin{align}
 B_r &=\sum_{i=1}^d\int_0^r 1_{[0,R]}(s)1_{[u,t]}(s)((1+H^i_s)\log(1+H^i_s)-H^i_s)\df{N^i}_s.
\end{align}
From this, we obtain that $B$ is locally integrable, and as $1_{[0,R]}$ is a predictable process, we have
\begin{align}
  (\Pi^*_pB)_\infty
  &=\sum_{i=1}^d\int_u^t 1_{[0,R]}(s)(\gamma^i_s\log\gamma^i_s-(\gamma^i_s-1))\lambda^i_s\df{s}\notag\\
  &\le \sum_{i=1}^d\int_u^t (\gamma^i_s\log\gamma^i_s-(\gamma^i_s-1))\lambda^i_s\df{s}.
\end{align}
Therefore, if the first integrability criterion is satisfied, $\EEE(L)$ is a uniformly integrable martingale by Theorem
\ref{theorem:LepingleMemin}, in particular a martingale. This proves
the first claim.

Next, we consider the case where the second integrability criterion is satisfied. We will use Lemma
\ref{lemma:SuccessiveIntensityChange} to prove that $\EEE((H\cdot
M)^t-(H\cdot M)^u)$ is a martingale in this case. To this end, we define predictable
$d$-dimensional processes $\mu^-$ and $\mu^+$ by
\begin{align}
  (\mu^-)^i_s &= \mu^i_s1_{(\mu^i_s\le\lambda^i_s)}+\lambda^i_s1_{(\mu^i_s>\lambda^i_s)}\\
  (\mu^+)^i_s &= (\mu^i_s-\lambda^i_s)1_{(\mu^i_s> \lambda^i_s)}.
\end{align}
We then have $\mu=\mu^++\mu^-$. Also define two processes
$(\gamma^*)^i=(\mu^-)^i(\lambda^i)^{-1}$ and
$(\gamma^{**})^i=\mu^i ((\mu^-)^i)^{-1}$, and $H^*=\gamma^*-1$ and
$H^{**}=\gamma^{**}-1$. Now, as $\lambda$ and $\mu$ are predictable,
$\mu^-$ and $\mu^+$ are predictable as well. Furthermore, $\mu^-$
and $\mu^+$ are both nonnegative and locally bounded. By inspection,
$\mu^-$ is $\lambda$-compatible and $\mu$ is $\mu^-$-compatible. Now define $(M^-)^i_t = N^i_t - \int_0^t (1_{(u,t]}(s)(H^*)^i_s+1)\lambda^i_s \df{s}$. Put $L^*=(H^*\cdot M)^t-(H^*\cdot M)^u$ and $L^{**}=(H^{**}\cdot
M^-)^t-(H^{**}\cdot M^-)^u$. Note that $L^*=H^*1_{(u,t]}\cdot
M$, $L^{**}=H^{**}1_{(u,t]}\cdot M^-$ and $L=H1_{(u,t]}\cdot M$. Invoking Lemma
\ref{lemma:SuccessiveIntensityChange}, we obtain
$\EEE(L)=\EEE(L^*)\EEE(L^{**})$. We will apply Theorem \ref{theorem:LepingleMemin} to the local
martingale $L^*$. By the same calculations
as earlier, noting that $(1+x)\log(1+x)\le 0$ when $-1\le x\le 0$, we obtain
\begin{align}
   E\exp\left(\sum_{i=1}^d\int_u^t \lambda^i_s\df{s}\right)
  &\le E\exp\left(\sum_{i=1}^d \int_u^t \lambda^i_s\df{s}+\int_u^t \log_+\gamma^i_s \df{N^i}_s\right)<\infty,
\end{align}
so Theorem \ref{theorem:LepingleMemin} shows that $\EEE(L^*)$ is
a uniformly integrable martingale. Let $Q$ be the measure with
Radon-Nikodym derivative $\EEE(L^*)_\infty$ with respect to $P$. We
then have $E^P\EEE(L)_\infty=E^Q\EEE(L^{**})_\infty$. To show that
$\EEE(L)$ is a uniformly integrable martingale, it suffices to show that this is equal to one. To do so, we will apply Theorem
\ref{theorem:LepingleMemin2} to show that $\EEE(L^{**})$ is a
uniformly integrable martingale under $Q$. To this end, first note
that by Lemma \ref{lemma:PointProcessMeasChg}, $N^i$ has intensity
$(1_{(u,t]}(H^*)^i+1)\lambda^i$ under $Q$. Therefore, $M^-$ is a local martingale
under $Q$, and so $L^{**}$ is a local martingale under $Q$ as well. Next, $(H^{**})^i_t
  =(\gamma^{**})^i_t-1
   =1_{(\mu^i_t\le\lambda^i_t)}+\gamma^i_t1_{(\mu^i_t>\lambda^i_t)}-1\ge0$,
so $\Delta L^{**}\ge0>-1$, and therefore Theorem
\ref{theorem:LepingleMemin2} is applicable. Now, with $A$ as in Theorem \ref{theorem:LepingleMemin2}, we have
\begin{align}
  A_\infty &=\frac{1}{2}[(L^{**})^c]_\infty+\sum_{0<s}\log(1+\Delta L^{**}_s)-\frac{\Delta  L^{**}_s}{1+\Delta L^{**}_s}\notag\\
  &\le\sum_{i=1}^d\int_u^t \log\frac{\mu^i_s}{(\mu^-)^i_s}\df{N^i}_s
 =\sum_{i=1}^d\int_u^t \log_+\gamma^i_s \df{N^i}_s.
\end{align}
Also, since $-1\le H^*\le 0$, we find
$\EEE(L^*)_\infty\le\exp(\sum_{i=1}^d \int_u^t \lambda^i_s\df{s})$, which leads to
\begin{align}
  E^Q\exp(A_\infty)
  &=E\EEE(L^*)_\infty \exp(A_\infty)
  \le E\exp\left(\sum_{i=1}^d \int_u^t \lambda^i_s\df{s}+\int_u^t \log_+\gamma^i_s \df{N^i}_s\right),
\end{align}
which is finite by assumption. Theorem \ref{theorem:LepingleMemin2}
then shows that $L^{**}$ is a uniformly integrable martingale under $Q$, so $E^Q\EEE(L^{**})_\infty=1$, from
which we conclude $E^P\EEE(L)_\infty=1$. Thus, $\EEE(L)$ is a uniformly integrable
martingale, in particular a martingale. This completes the proof.
{\hfill $\Box$}

\textbf{\textit{Proof of Corollary \ref{coro:mainPoissonMGCrit}:\quad}}
First note that
\begin{align}
  x\log x-(x-1)&\le 1+x\log x\le 1+x\log_+x
\end{align}
for $x\ge0$. Therefore, as $\lambda=1$, the first moment condition of Theorem
\ref{theorem:mainPointProcessMGCrit} reduces to the first moment
condition in the statement of the corollary. Furthermore, we have $E\exp(\sum_{i=1}^d
\int_u^t\lambda_s^i\df{s}+\int_u^t\log_+
\gamma^i_s\df{N^i}_s)=e^{d(t-u)}E\exp(\sum_{i=1}^d\int_u^t\log_+
\gamma^i_s\df{N^i}_s)$ as $\lambda=1$, and so the result for the second moment condition of the corollary
follows. This completes the proof.
{\hfill $\Box$}

\appendix
\section{Supplementary results}
\label{sec:Proofs}


\begin{lemma}
\label{lemma:PoissonExpMoment}
Let $Z$ be Poisson distributed with parameter $\mu$. Then
$\exp(\eps Z\log Z)$ is integrable whenever $0\le \eps<1$.
\end{lemma}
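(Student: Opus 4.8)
The plan is to evaluate the expectation as an explicit series over the Poisson probabilities and then control it using the Stirling-type estimate of Lemma \ref{lemma:Stirling}. Since $Z$ is Poisson with parameter $\mu$, we have $P(Z=n)=e^{-\mu}\mu^n/n!$, and, using the convention $0\log 0=0$ (so that the integrand equals $1$ on $\{Z=0\}\cup\{Z=1\}$),
\begin{align*}
  E\exp(\eps Z\log Z) = e^{-\mu}\sum_{n=0}^\infty \frac{\mu^n}{n!}\exp(\eps n\log n) = e^{-\mu}\sum_{n=0}^\infty \frac{\mu^n n^{\eps n}}{n!}.
\end{align*}
So the whole question reduces to showing that this last series converges.

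Next I would apply Lemma \ref{lemma:Stirling}, which gives a constant $C$ with $\frac{1}{n!}\le C\exp(n-n\log n)=Ce^n n^{-n}$ for all $n\ge0$. Substituting this bound term by term yields
\begin{align*}
  E\exp(\eps Z\log Z) \le Ce^{-\mu}\sum_{n=0}^\infty (\mu e)^n n^{(\eps-1)n},
\end{align*}
where $n^{(\eps-1)n}$ is interpreted via $\exp((\eps-1)n\log n)$, again with the convention at $n=0$. Rewriting the summand as $(\mu e\, n^{\eps-1})^n$, we see that convergence follows from $\eps<1$: the factor $n^{\eps-1}$ tends to zero, so for all $n$ large enough that $n^{1-\eps}\ge 2\mu e$ the summand is at most $2^{-n}$, and the tail of the series is dominated by a convergent geometric series. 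Hence the series is finite and $\exp(\eps Z\log Z)$ is integrable.

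I do not anticipate any real obstacle here; the argument is a direct computation once Lemma \ref{lemma:Stirling} is available, and the role of the hypothesis $\eps<1$ is exactly to make the super-geometric decay of $n^{(\eps-1)n}$ beat the geometric growth of $(\mu e)^n$. The only point requiring a little care is the bookkeeping of the conventions at $n=0$ and $n=1$, where $n\log n=0$, which is consistent with the conventions already fixed in the statement and in Lemma \ref{lemma:Stirling}.
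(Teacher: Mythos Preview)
Your proof is correct and follows essentially the same route as the paper: write the expectation as a series, bound $1/n!$ via Lemma~\ref{lemma:Stirling}, rewrite the summand as $(\mu e\,n^{\eps-1})^n$, and conclude by comparison with a geometric series since $\eps-1<0$ forces the base below $1$ for large $n$.
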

\begin{proof}
This follows by an application of Stirling's formula, see (6.11.2) of
\cite{MR2933767}, and comparison with a geometric series.
\end{proof}

\begin{lemma}
\label{lemma:DiffusionPointProcessCoefExistence}
Consider $A:\NN_0^d\times \RR^d_+\to\RR^d$, $B:\NN^d_0\times \RR^d_+\to\MM(d,d)$ and
$\sigma:\NN^d_0\times \RR^d_+\to\MM(d,d)$ such that
$A(\eta,\cdot)$, $B(\eta,\cdot)$ and $\sigma(\eta,\cdot)$ are bounded and
continuous for $\eta\in\NN_0^d$. Let $W$ be a $d$-dimensional $(\FFF_t)$
Brownian motion. Let $T^i_n$ be the $n$'th event time for $N^i$ and
let $Z^i_t = t- T^i_{N^i_t}$. The stochatic differential equation
\begin{align}
\df{X}_t = (A(N_t,Z_t)+B(N_t,Z_t)X_t)\df{t} +
\sigma(N_t,Z_t) \df{W}_t
\end{align}
is exact, in the sense that for any initial value, it has a pathwise
unique solution. Defining $C_t = \exp(-\int_0^t B(N_s,Z_s)\df{s})$, the solution is
\begin{align}
  X_t &= C^{-1}_t\left(X_0+\int_0^t C_sA(N_s,Z_s)\df{s}+\int_0^t C_s\sigma(N_s,Z_s)\df{W}_s\right).
\end{align}
\end{lemma}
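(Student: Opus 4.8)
The plan is to produce the explicit solution displayed in the statement, verify directly that it solves the equation, and obtain pathwise uniqueness from Gronwall's inequality; the underlying point is that between jumps of $N$ the coefficients are deterministic, continuous and bounded, so pathwise the equation is an ordinary linear SDE with well-behaved coefficients (one could equally well solve it on each inter-jump interval by standard linear theory and patch, but the explicit formula is more direct). First I would record that the coefficient processes are well-behaved. Since $N$ is a nonexplosive counting process, almost surely it has only finitely many jumps on each bounded interval, so on $[0,t]$ the process $N$ takes only finitely many values in $\NN_0^d$; listing the jump times of all coordinates of $N$ as $0=S_0<S_1<\dots$, on each interval $[S_k,S_{k+1})$ the path $s\mapsto(N_s,Z_s)$ is continuous, because $N$ is constant there and each $Z^i_s$ is affine. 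Hence, by the continuity and boundedness of $A(\eta,\cdot)$, $B(\eta,\cdot)$, $\sigma(\eta,\cdot)$ for each fixed $\eta$, the processes $s\mapsto A(N_s,Z_s)$, $s\mapsto B(N_s,Z_s)$, $s\mapsto\sigma(N_s,Z_s)$ are c\`adl\`ag, adapted and almost surely bounded on compacts, so $\int_0^t A(N_s,Z_s)\df{s}$ and $\int_0^t B(N_s,Z_s)\df{s}$ are well-defined and $\int_0^t C_s\sigma(N_s,Z_s)\df{W}_s$ will be a well-defined continuous local martingale once $C$ is in place.

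Next I would construct $C$ as the pathwise solution of the linear matrix integral equation $C_t=I-\int_0^t C_s B(N_s,Z_s)\df{s}$; existence and uniqueness of a locally absolutely continuous solution follows from the Carath\'eodory theory for linear ordinary differential equations, using that $s\mapsto B(N_s,Z_s)$ is measurable and locally bounded by the first step. When the matrices $B(N_s,Z_s)$, $s\le t$, commute --- in particular whenever $B$ is scalar-valued, as in all the one-dimensional applications --- this solution equals $\exp(-\int_0^t B(N_s,Z_s)\df{s})$, and I would use the exponential notation for the fundamental matrix in general. One checks that $C_t$ is invertible for all $t$: the equation $\widetilde C_t=I+\int_0^t B(N_s,Z_s)\widetilde C_s\df{s}$ has a unique solution $\widetilde C$, and since $\tfrac{d}{dt}(C_t\widetilde C_t)=0$ one gets $C_t\widetilde C_t=I$, so $C^{-1}=\widetilde C$; both $C$ and $C^{-1}$ are continuous, adapted and of finite variation, hence locally bounded.

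Now set $Y_t=X_0+\int_0^t C_s A(N_s,Z_s)\df{s}+\int_0^t C_s\sigma(N_s,Z_s)\df{W}_s$, which is well-defined by the above, and define $X_t=C_t^{-1}Y_t$. Since $C^{-1}$ is continuous of finite variation, the integration-by-parts formula gives $\df{X}_t=Y_t\,\df{(C_t^{-1})}+C_t^{-1}\df{Y}_t$ with no covariation term; combined with $\df{(C_t^{-1})}=B(N_t,Z_t)C_t^{-1}\df{t}$, which follows by differentiating $C_tC_t^{-1}\equiv I$, and with the definition of $Y$, this reduces at once to $\df{X}_t=(A(N_t,Z_t)+B(N_t,Z_t)X_t)\df{t}+\sigma(N_t,Z_t)\df{W}_t$, while $X_0=C_0^{-1}Y_0=X_0$; as $X$ is continuous and adapted it is a genuine solution. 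For pathwise uniqueness, if $X$ and $X'$ are solutions with the same initial value then $D=X-X'$ is continuous with $D_0=0$ and $\df{D}_t=B(N_t,Z_t)D_t\df{t}$, the $A$- and $\sigma$-terms cancelling; Gronwall's inequality and the local boundedness of $s\mapsto\|B(N_s,Z_s)\|$ then force $D\equiv 0$, so $X=X'$ up to indistinguishability.

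The only slightly delicate points are bookkeeping ones. One must invoke the non-explosiveness of $N$ to ensure that only finitely many values $\eta$, and hence genuine bounds on the coefficients, are relevant on each bounded time interval, so that all integrands are locally bounded and Gronwall applies; and one must reconcile the matrix-exponential notation of the statement with the fundamental-matrix solution $C$ in the non-commuting case. Neither obstacle requires any real work beyond the observations above, and the integration-by-parts computation of the third paragraph together with the Gronwall estimate for uniqueness are entirely routine.
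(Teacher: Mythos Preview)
Your argument is correct and follows essentially the same route as the paper: both verify the explicit formula by an integration-by-parts computation with the integrating factor $C_t$. The only substantive difference is in the uniqueness step. The paper assumes an arbitrary solution $X$, applies integration by parts to $C_tX_t$, and reads off that $C_tX_t$ must equal the fixed process $X_0+\int_0^t C_sA(N_s,Z_s)\df{s}+\int_0^t C_s\sigma(N_s,Z_s)\df{W}_s$, which forces uniqueness directly; you instead subtract two solutions and invoke Gronwall. Both are standard for linear equations, and your version has the small advantage of not requiring invertibility of $C_t$ for the uniqueness half. Your caveat about the matrix exponential is also well taken: the paper writes $C_t=\exp(-\int_0^t B(N_s,Z_s)\df{s})$ without comment, which is literally correct only when the matrices $B(N_s,Z_s)$ commute for different $s$; the object actually used in the paper's integration-by-parts calculation is precisely the fundamental solution you describe, so your reading of the notation as shorthand for that is the right one.
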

\begin{proof}
Let $\tilde{A}_s=A(N_s,Z_s)$, and define $\tilde{B}$ and
$\tilde{\sigma}$ analogously. Note that as $N$ and $Z$ are adapted,
$\tilde{A}$ is adapted as well, since $A(\eta,\cdot)$ is continuous
and so Borel measurable for all $\eta\in\NN_0^d$. As the process also is right-continuous and
locally bounded, all integrals are well-defined, and similarly for
$\tilde{B}$ and $\tilde{\sigma}$. Let $X_0$ be some initial value. Assume
that $X$ is a solution to the stochastic differential equation. Note that
each entry of $C_t$ is differentiable as a function of $t$, and $\frac{\df{}}{\df{t}}C^{ij}_t =
(-\tilde{B}_tC_t)^{ij}$. The integration-by-parts formula yields
\begin{align}
  (C_tX_t)_i &=X^i_0+\sum_{j=1}^d \int_0^t C^{ij}_s\df{X^j}_s -\int_0^t X_s^j(\tilde{B}_sC_s)^{ij}\df{s}.
\end{align}
This implies $(C_tX_t)_i =X_0^i+\sum_{j=1}^d \int_0^t C^{ij}_s\tilde{A}^j_s\df{s}+\int_0^t
C^{ij}_s\sum_{k=1}^d\tilde{\sigma}^{jk}_s\df{W}^k_s$, since $X$ is a
solution, leading to
\begin{align}
  X_t = C^{-1}_t\left(X_0+\int_0^t C_s A(N_s,Z_s)\df{s}+\int_0^t
    C_s \sigma(N_s,Z_s)\df{W}_s\right).
\end{align}
This proves pathwise uniqueness. Applying the integration-by-parts
formula to the above shows that the proposed solution in fact is a
solution, yielding existence.
\end{proof}

\begin{lemma}
\label{lemma:NormalExpPowerMoment}
Let $X$ be a $d$-dimensional normally distributed variable with mean
$\xi$ and positive definite variance $\Sigma$. Let $c>0$ and $0<\eps<1$. Then $\exp(c\|X\|_2^{1+\eps})$
is integrable. Furthermore, defining $a(c,\eps)=2^{1+\eps}c$ and
$b(c,\eps)=16^{(1+\eps)/(1-\eps)}c^{2/(1-\eps)}$, it
holds that
\begin{align}
  E\exp(c\|X\|_2^{1+\eps})
  \le k_d\exp(a(c,\eps)\|\xi\|^{1+\eps})\exp\left(b(c,\eps)\|\Sigma\|_2^\frac{1+\eps}{1-\eps}\right),
\end{align}
where $k_d=A_dm_{d-1}(\sqrt{2}\sqrt{\pi^{d-1}})^{-1}$, $A_d$ is
the area of the unit sphere in $d$ dimensions and $m_d$ is
the $d$'th absolute moment of the standard normal distribution.
\end{lemma}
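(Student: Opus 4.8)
The plan is to reduce the $d$-dimensional Gaussian expectation to a one-dimensional radial integral against the standard normal and then control the exponential growth by Young's inequality.

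First I would write $X = \xi + \Sigma^{1/2}Y$, where $Y$ is a $d$-dimensional standard normal vector and $\Sigma^{1/2}$ is the symmetric positive definite square root of $\Sigma$. Since for a symmetric positive semidefinite matrix the operator norm equals the largest eigenvalue, $\|\Sigma^{1/2}\|_2 = \|\Sigma\|_2^{1/2}$, and hence $\|X\|_2 \le \|\xi\|_2 + \|\Sigma\|_2^{1/2}\|Y\|_2$. Using the elementary inequality $(s+t)^{1+\eps}\le 2^{1+\eps}(s^{1+\eps}+t^{1+\eps})$ for $s,t\ge0$, this gives $c\|X\|_2^{1+\eps}\le a(c,\eps)\|\xi\|_2^{1+\eps} + \kappa\|Y\|_2^{1+\eps}$ with $\kappa = 2^{1+\eps}c\|\Sigma\|_2^{(1+\eps)/2}$. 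Taking exponentials and expectations, it remains to show $E\exp(\kappa\|Y\|_2^{1+\eps}) \le k_d\exp(b(c,\eps)\|\Sigma\|_2^{(1+\eps)/(1-\eps)})$; the integrability claim will follow once this bound is established.

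Passing to polar coordinates, $E\exp(\kappa\|Y\|_2^{1+\eps}) = \frac{A_d}{(2\pi)^{d/2}}\int_0^\infty \exp(\kappa r^{1+\eps})e^{-r^2/2}r^{d-1}\df r$. The key step is to absorb $\kappa r^{1+\eps}$ into half of the Gaussian factor: writing $e^{-r^2/2} = e^{-r^2/4}e^{-r^2/4}$, I would bound $\kappa r^{1+\eps}\le r^2/4 + C$ by applying Young's inequality $ab\le a^p/p + b^q/q$ with the conjugate exponents $p = 2/(1+\eps)$ (which exceeds $1$ because $\eps<1$) and $q = 2/(1-\eps)$ to the factorisation $\kappa r^{1+\eps} = (tr^{1+\eps})(\kappa/t)$, where $t$ is chosen so that the resulting $r^2$-coefficient equals exactly $1/4$. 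This produces $C = \tfrac{1-\eps}{2}(2(1+\eps))^{(1+\eps)/(1-\eps)}\kappa^{2/(1-\eps)}$, and the crude bounds $\tfrac{1-\eps}{2}<1$ and $1+\eps<2$, together with $\kappa^{2/(1-\eps)} = 4^{(1+\eps)/(1-\eps)}c^{2/(1-\eps)}\|\Sigma\|_2^{(1+\eps)/(1-\eps)}$, give $C\le b(c,\eps)\|\Sigma\|_2^{(1+\eps)/(1-\eps)}$. Hence $\exp(\kappa r^{1+\eps})e^{-r^2/2}\le e^{C}e^{-r^2/4}$ and $E\exp(\kappa\|Y\|_2^{1+\eps}) \le e^{C}\tfrac{A_d}{(2\pi)^{d/2}}\int_0^\infty e^{-r^2/4}r^{d-1}\df r$.

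Finally I would evaluate the remaining integral by the substitution $r = \sqrt2\,u$, obtaining $\int_0^\infty e^{-r^2/4}r^{d-1}\df r = 2^{d/2}\int_0^\infty e^{-u^2/2}u^{d-1}\df u = 2^{d/2-1}\sqrt{2\pi}\,m_{d-1}$, where $m_{d-1}$ is the $(d-1)$st absolute moment of the standard normal; collecting constants shows $\tfrac{A_d}{(2\pi)^{d/2}}\int_0^\infty e^{-r^2/4}r^{d-1}\df r = A_d m_{d-1}(\sqrt2\sqrt{\pi^{d-1}})^{-1} = k_d$. This yields $E\exp(\kappa\|Y\|_2^{1+\eps})\le k_d e^{C}\le k_d\exp(b(c,\eps)\|\Sigma\|_2^{(1+\eps)/(1-\eps)})$, which combined with the first reduction gives the stated inequality. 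The only real obstacle is the bookkeeping with constants — choosing the Young exponents and the splitting parameter $t$ so that the resulting constant is dominated by $b(c,\eps)$, and matching the normalising constant to $k_d$ — while everything else is routine.
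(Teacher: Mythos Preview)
Your proposal is correct and follows essentially the same route as the paper: the affine decomposition $X=\xi+\Sigma^{1/2}Y$, the subadditivity bound $(s+t)^{1+\eps}\le 2^{1+\eps}(s^{1+\eps}+t^{1+\eps})$, the passage to polar coordinates, the splitting $e^{-r^2/2}=e^{-r^2/4}e^{-r^2/4}$, and the substitution $r=\sqrt{2}\,u$ to identify $k_d$ are all exactly as in the paper. The only cosmetic difference is that where you invoke Young's inequality with exponents $p=2/(1+\eps)$, $q=2/(1-\eps)$ to bound $\kappa r^{1+\eps}-\tfrac{1}{4}r^2$, the paper instead maximises $\phi(r)=Cr^{1+\eps}-\tfrac{1}{4}r^2$ directly by calculus; both yield the identical constant $\tfrac{1-\eps}{2}(2(1+\eps))^{(1+\eps)/(1-\eps)}\kappa^{2/(1-\eps)}$, which is then crudely bounded by $b(c,\eps)\|\Sigma\|_2^{(1+\eps)/(1-\eps)}$ in the same way.
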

\begin{proof}
By \cite{MR792300}, p. 181, $\Sigma$ has a unique symmetric positive definite square
root $\Sigma^{1/2}$ such that $\Sigma =
(\Sigma^{1/2})^2$. Furthermore, with $Y=\Sigma^{-1/2}(X-\xi)$, it holds that
$X=\xi+\Sigma^\frac{1}{2}Y$, where $Y$ is $d$-dimensionally standard
normally distributed. With $\|\cdot\|_2$ denoting the operator norm
induced by the Euclidean norm, we get
\begin{align}
  E\exp(c\|X\|_2^{1+\eps})
  &= E\exp(c\|\xi+\Sigma^\frac{1}{2}Y\|_2^{1+\eps})
  \le E\exp(c(\|\xi\|_2+\|\Sigma^\frac{1}{2}Y\|_2)^{1+\eps})\notag\\
  &\le E\exp(c2^{1+\eps}(\|\xi\|^{1+\eps}_2+\|\Sigma^\frac{1}{2}Y\|_2^{1+\eps}))\notag\\
  &\le\exp(c2^{1+\eps}\|\xi\|^{1+\eps})E\exp\left(c2^{1+\eps}\|\Sigma\|_2^{(1+\eps)/2}\|Y\|_2^{1+\eps}\right).
\end{align}
Switching to polar coordinates, we
obtain, with $A_d$ denoting the area of the unit sphere in $d$
dimensions and $C=c2^{1+\eps}\|\Sigma\|_2^{(1+\eps)/2}$,
\begin{align}
  & E\exp\left(c2^{1+\eps}\|\Sigma\|_2^{(1+\eps)/2}\|Y\|_2^{1+\eps}\right)\notag\\
  &=\int_{\RR^d}\exp\left(C\|x\|^{1+\eps}_2\right)\frac{1}{\sqrt{(2\pi)^d}}\exp\left(-\frac{1}{2}\|x\|_2^2\right)\df{x}\notag\\
  &=\frac{A_d}{\sqrt{(2\pi)^{d-1}}}\int_0^\infty\exp\left(Cr^{1+\eps}\right)\frac{1}{\sqrt{2\pi}}\exp\left(-\frac{1}{2}r^2\right)r^{d-1}\df{r}.
\end{align}
Using a change of variables, we obtain the bound
\begin{align}
  &\int_0^\infty\exp\left(Cr^{1+\eps}\right)\frac{1}{\sqrt{2\pi}}\exp\left(-\frac{1}{2}r^2\right)r^{d-1}\df{r}\notag\\
  &\le \int_0^\infty r^{d-1}\frac{1}{\sqrt{2\pi}}\exp\left(-\frac{1}{4}r^2\right)\df{r}\sup_{s\ge0}\exp\left(Cs^{1+\eps}-\frac{1}{4}s^2\right)\notag\\
  &=2^{d/2}\int_0^\infty r^{d-1}\frac{1}{\sqrt{2\pi}}\exp\left(-\frac{1}{2}r^2\right)\df{r}\sup_{s\ge0}\exp\left(Cs^{1+\eps}-\frac{1}{4}s^2\right).
\end{align}
With $m_d$ denoting the $d$'th absolute moment of the standard normal
distribution, we have $\int_0^\infty
r^{d-1}\frac{1}{\sqrt{2\pi}}\exp\left(-\frac{1}{2}r^2\right)\df{r}=\frac{1}{2}m_{d-1}$. Also,
defining $\phi(r)=C r^{1+\eps}-\frac{1}{4}r^2$ for
$r\ge0$, $\phi$ has a global maximum at
$r^*=(2C(1+\eps))^{1/(1-\eps)}$ which satisfies $\phi(r^*)\le
4^{\frac{1+\eps}{1-\eps}}C^{\frac{2}{1-\eps}}$. This allows us to conclude
\begin{align}
  \int_0^\infty\exp\left(Cr^{1+\eps}\right)\frac{1}{\sqrt{2\pi}}\exp\left(-\frac{1}{2}r^2\right)r^{d-1}\df{r}
  &\le 2^{d/2-1}m_{d-1}\exp\left(4^{\frac{1+\eps}{1-\eps}}C^{\frac{2}{1-\eps}}\right).
\end{align}
Recalling our definition of $C$, we have
$4^{\frac{1+\eps}{1-\eps}}C^{\frac{2}{1-\eps}}=16^{\frac{1+\eps}{1-\eps}}c^{\frac{2}{1-\eps}}\|\Sigma\|_2^{\frac{1+\eps}{1-\eps}}$. Therefore,
defining $a(c,\eps)=2^{1+\eps}c$ and
$b(c,\eps)=16^{(1+\eps)/(1-\eps)}c^{2/(1-\eps)}$, we
finally obtain the result.
\end{proof}

\begin{lemma}
\label{lemma:PredictableHawkesIntensity}
Let $N$ be a point process, let $h:\RR_+\to\RR$ be Borel measurable and
define $\mu_t = \int_0^{t-} h(t-s)\df{N}_s$. Then $\mu$ is a
predictable process.
\end{lemma}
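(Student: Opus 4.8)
The plan is to realise $\mu$ as a pointwise convergent sum of predictable processes, one for each jump time of $N$. Let $(T_n)_{n\ge1}$ denote the successive jump times of $N$; since $N$ is adapted and piecewise constant, these are stopping times, the jump measure $\df{N}$ equals $\sum_{n\ge1}\delta_{T_n}$, and hence
\[
  \mu_t = \int_0^{t-}h(t-s)\df{N}_s = \sum_{n\ge1} h(t-T_n)\,1_{(t>T_n)} =: \sum_{n\ge1}Y^n_t.
\]
Because $N$ has only finitely many jumps before any finite time, the series converges pointwise for every $(t,\omega)$, so $\mu$ is the pointwise limit of its partial sums and it suffices to prove that each $Y^n$ is predictable.

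I would establish this from two building blocks. First, the process $(t,\omega)\mapsto (t-T_n(\omega))^+$ is continuous in $t$ and adapted, since for $x\ge0$ one has $\{(t-T_n)^+\le x\} = \{T_n\ge t-x\}\in\FFF_t$; being continuous and adapted it is predictable, and as $h$ is Borel measurable the composition $(t,\omega)\mapsto h\bigl((t-T_n(\omega))^+\bigr)$ is predictable as well. Second, the indicator of the stochastic interval $\ldb 0,T_n\rdb$, that is $(t,\omega)\mapsto 1_{(t\le T_n(\omega))}$, is left-continuous in $t$ and adapted, hence predictable. Combining them, note that if $t>T_n$ then $(t-T_n)^+=t-T_n$ and $1_{(t\le T_n)}=0$, whereas if $t\le T_n$ then $(t-T_n)^+=0$ and $1_{(t\le T_n)}=1$; in either case
\[
  Y^n_t = h\bigl((t-T_n)^+\bigr) - h(0)\,1_{(t\le T_n)},
\]
so $Y^n$ is a difference of two predictable processes, hence predictable. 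Summing over $n$ then shows that $\mu$ is predictable.

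The one step requiring care is the adaptedness of $(t-T_n)^+$: the untruncated process $t\mapsto t-T_n$ is \emph{not} adapted, because a single stopping time need not be $\FFF_t$-measurable, but taking the positive part excises exactly the non-adapted part, and this is what legitimises feeding the result into the Borel function $h$. The remaining ingredients --- predictability of continuous adapted and of left-continuous adapted processes, and stability of predictability under differences and pointwise limits --- are standard. (Alternatively, the lemma follows by a functional monotone class argument: the bounded Borel functions $h$ for which $\mu$ is predictable form a vector space containing the constants, stable under bounded monotone limits, and containing the multiplicative family $\{1_{[0,a]}:a\ge0\}$, for which $\mu_t=N_{t-}-N_{(t-a)-}$ is visibly left-continuous and adapted; hence it contains all bounded Borel $h$, and general Borel $h$ follow by truncation.)
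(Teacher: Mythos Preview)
Your proof is correct. Both you and the paper begin with the same decomposition $\mu_t=\sum_{n\ge1}h(t-T_n)1_{(t>T_n)}$ over the jump times, but you diverge at the technical core. The paper approximates $h$ by simple functions $h_n=\sum_k c_{nk}1_{A_{nk}}$, reducing the problem to predictability of processes of the form $t\mapsto 1_{(t-T_n\in A)}1_{(T_n<t)}$, and then runs a Dynkin-class argument over the Borel sets $A\subseteq\RR_+$, verifying the generator case $A=(a,\infty)$ directly since $1_{(T_n+a<t)}$ is left-continuous and adapted. Your route avoids both the simple-function approximation and the Dynkin argument: you observe once and for all that $(t-T_n)^+$ is continuous and adapted, hence predictable, feed it into the Borel map $h$, and use the algebraic identity $Y^n_t=h\bigl((t-T_n)^+\bigr)-h(0)1_{(t\le T_n)}$ to remove the spurious value at $t\le T_n$. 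This is slicker---one composition with $h$ rather than an approximation followed by a $\pi$--$\lambda$ argument---and your remark that the \emph{un}truncated $t-T_n$ is not adapted, so the positive part is essential, is exactly the point. The monotone-class alternative you sketch at the end is essentially the paper's argument in different clothing.
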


\begin{proof}
This follows by monotone convergence and Dynkin class arguments.
\end{proof}

\begin{lemma}
\label{lemma:LocalLimitEMCrit}
Let $(T_n)$ be a localising sequence and assume that $\EEE(M)^{T_n}$
is a martingale. $\EEE(M)$ is a martingale if and
only if $\lim_n E\EEE(M)_{T_n}1_{(T_n\le t)}=0$ for each $t\ge0$.
\end{lemma}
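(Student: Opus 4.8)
The plan is to use Lemma \ref{lemma:SimpleEMCrit} to reduce the claim to the statement that $E\EEE(M)_t = 1$ for all $t \ge 0$, and then to extract that equality, in both directions at once, from a single stopping identity.

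First I would recall that by Lemma \ref{lemma:EMBasicProps}, $\EEE(M)$ is a nonnegative supermartingale with $E\EEE(M)_t \le 1$, and by Lemma \ref{lemma:SimpleEMCrit} it is a martingale if and only if $E\EEE(M)_t = 1$ for every $t \ge 0$. So it suffices to show that, for each fixed $t$, the equality $E\EEE(M)_t = 1$ holds if and only if $\lim_n E\EEE(M)_{T_n}1_{(T_n \le t)} = 0$.

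Next, fix $t \ge 0$ and $n$. Since $\EEE(M)^{T_n}$ is by hypothesis a martingale with initial value $\EEE(M)_0 = 1$, we have $E\EEE(M)_{t \wedge T_n} = 1$. I would then decompose this expectation over the complementary events $(T_n \le t)$ and $(T_n > t)$; on the first, $\EEE(M)_{t\wedge T_n} = \EEE(M)_{T_n}$, while on the second, $\EEE(M)_{t\wedge T_n} = \EEE(M)_t$. This yields the identity
\[
1 = E\EEE(M)_{T_n}1_{(T_n \le t)} + E\EEE(M)_t 1_{(T_n > t)}.
\]
Since $(T_n)$ is a localising sequence, $T_n \uparrow \infty$ almost surely, so $1_{(T_n > t)} \to 1$ almost surely; as $0 \le \EEE(M)_t 1_{(T_n > t)} \le \EEE(M)_t$ and $\EEE(M)_t$ is integrable, dominated convergence gives $E\EEE(M)_t 1_{(T_n > t)} \to E\EEE(M)_t$. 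From the displayed identity it follows that $\lim_n E\EEE(M)_{T_n}1_{(T_n \le t)}$ exists and equals $1 - E\EEE(M)_t$. Hence $E\EEE(M)_t = 1$ if and only if $\lim_n E\EEE(M)_{T_n}1_{(T_n \le t)} = 0$; combining this over all $t \ge 0$ with Lemma \ref{lemma:SimpleEMCrit} gives the result.

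I do not expect a genuine obstacle here: the proof rests only on the elementary partition according to whether $T_n$ has occurred by time $t$, the martingale identity for $\EEE(M)^{T_n}$, and dominated convergence. The only point needing a moment's care is notational, namely that $\EEE(M)_{T_n}$ appears exclusively multiplied by $1_{(T_n \le t)}$, so it is evaluated only where $T_n < \infty$ and no reference to the terminal variable $\EEE(M)_\infty$ is needed.
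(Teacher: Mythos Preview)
Your proof is correct and follows essentially the same approach as the paper: both decompose $E\EEE(M)_{t\wedge T_n}=1$ over the events $(T_n\le t)$ and $(T_n>t)$, apply dominated convergence to the second piece, and then invoke Lemma~\ref{lemma:SimpleEMCrit}. The only difference is cosmetic ordering of the same computation.
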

\begin{proof}
By our assumptions on the martingale property of $\EEE(M)^{T_n}$, it
holds that $E\EEE(M)_{T_n}1_{(T_n\le t)}=1-E\EEE(M)_t1_{(T_n>t)}$. By the Dominated Convergence Theorem, $\lim_n
E\EEE(M)_t1_{(T_n>t)}=E\EEE(M)_t$. Thus, $\lim_n E\EEE(M)_{T_n}1_{(T_n\le
  t)}=1-E\EEE(M)_t$, and so Lemma \ref{lemma:SimpleEMCrit} yields
the result.
\end{proof}

\bibliographystyle{unsrt}

\bibliography{full}

\end{document}